\newtheorem{theorem}{Theorem}[section]
\newtheorem{proposition}[theorem]{Proposition}
\newtheorem{lemma}[theorem]{Lemma}
\newtheorem{question}[theorem]{Question}
\theoremstyle{definition}
\newtheorem{definition}[theorem]{Definition}
\def\N{{\mathbb N}}
\begin{document}
\title[The $k_{R}$-property on free topological groups]
{The $k_{R}$-property on free topological groups}

  \author{Fucai Lin}
  \address{(Fucai Lin): School of mathematics and statistics,
  Minnan Normal University, Zhangzhou 363000, P. R. China}
  \email{linfucai2008@aliyun.com; linfucai@mnnu.edu.cn}

\author{Shou Lin}
\address{(Shou Lin): Institute of Mathematics, Ningde Teachers' College, Ningde, Fujian
352100, P. R. China} \email{shoulin60@163.com}

  \author{Chuan Liu}
\address{(Chuan Liu): Department of Mathematics,
Ohio University Zanesville Campus, Zanesville, OH 43701, USA}
\email{liuc1@ohio.edu}

  \thanks{The first author is supported by the NSFC (Nos. 11571158, 11201414, 11471153),
  the Natural Science Foundation of Fujian Province (Nos. 2017J01405, 2016J05014, 2016J01671, 2016J01672) of China.}

  \keywords{$k_{R}$-space; $k$-space; stratifiable space; La\v{s}nev space; $k$-network; free topological group.}%insert keywords
  \subjclass[2000]{Primary 54H11, 22A05; Secondary  54E20; 54E35; 54D50; 54D55.}%insert subject class

  %\date{\today}
  \begin{abstract}
A space $X$ is called a $k_{R}$-space, if
$X$ is Tychonoff and the necessary and sufficient condition for a real-valued
function $f$ on $X$ to be continuous is that the restriction of $f$ on each compact subset is
continuous. In this paper, we mainly discuss the $k_{R}$-property on the free topological groups, and generalize some well-known results of K. Yamada's in the free topological groups.
  \end{abstract}

  \maketitle

\section{Introduction}
Recall that $X$ is called a {\it $k$-space}, if the necessary and sufficient condition for a subset $A$ of $X$ to be closed is that $A\cap C$ is closed for every compact subset $C$. It is well-known that the $k$-property generalizing
metrizability has been studied intensively by topologists and analysts. A space $X$ is called a $k_{R}$-space, if
$X$ is Tychonoff and the necessary and sufficient condition for a real-valued
function $f$ on $X$ to be continuous is that the restriction of $f$ on each compact subset is
continuous. Clearly every Tychonoff $k$-space is a {\it $k_{R}$-space}. The converse
is false. Indeed, for any uncountable cardinal $\kappa$ the power $\mathbb{R}^{\kappa}$ is a $k_{R}$-space but not a $k$-space, see \cite[Theorem 5.6]{No1970} and \cite[Problem 7.J(b)]{K1975}. Now, the $k_{R}$-property has been widely used in the study of topology, analysis and category, see \cite{BG2014, B2016, Bl1977, B1981, L2006, Lin1991, M1973}.

Results of our research will be presented in two separate papers. In the paper \cite{LLL2016}, we mainly extend some well-known results in $k$-spaces to $k_{R}$-spaces, and then seek some applications in the study of free Abelian topological groups. In the current paper, we shall deter the $k_{R}$-property on free topological groups and extend some results of K. Yamada's on free topological groups.

The paper is organized as follows. In Section 2, we introduce the necessary notation and terminologies which are
  used for the rest of the paper. In Section 3, we investigate the
  $k_{R}$-property on free topological groups, and generalize some results of K. Yamada's. In section 4, we pose some interesting questions about $k_{R}$-spaces in the class of free topological groups which are still unknown for us.

\maketitle
\section{Preliminaries}
In this section, we introduce the necessary notation and terminologies. Throughout this paper, all topological spaces are assumed to be
  Tychonoff, unless otherwise is explicitly stated.
  First of all, let $\N$ be the set of all positive
  integers and $\omega$ the first countable order. For a space $X$, we always denote the set of all the non-isolated points by $\mbox{NI}(X)$. For undefined
  notation and terminologies, the reader may refer to \cite{AT2008},
  \cite{E1989}, \cite{Gr1984} and \cite{Lin2015}.

  \medskip
  Let $X$ be a topological space and $A \subseteq X$ be a subset of $X$.
  The \emph{closure} of $A$ in $X$ is denoted by $\overline{A}$. Moreover, $A$ is called \emph{bounded} if every continuous real-valued
  function $f$ defined on $A$ is bounded.
 The space $X$ is called a
  \emph{$k$-space} provided that a subset $C\subseteq X$ is closed in $X$ if
  $C\cap K$ is closed in $K$ for each compact subset $K$ of $X$.
A space $X$ is called a $k_{R}$-space, if
$X$ is Tychonoff and the necessary and sufficient condition for a real-valued
function $f$ on $X$ to be continuous is that the restriction of $f$ on each compact subset is
continuous. Note that every Tychonoff $k$-space is a $k_{R}$-space. A subset $P$ of $X$ is called a
  \emph{sequential neighborhood} of $x \in X$, if each
  sequence converging to $x$ is eventually in $P$. A subset $U$ of
  $X$ is called \emph{sequentially open} if $U$ is a sequential neighborhood of
  each of its points. A subset $F$ of
  $X$ is called \emph{sequentially closed} if $X\setminus F$ is sequentially open. The space $X$ is called a \emph{sequential  space} if each
  sequentially open subset of $X$ is open. The space $X$ is said to be {\it Fr\'{e}chet-Urysohn} if, for
each $x\in \overline{A}\subset X$, there exists a sequence
$\{x_{n}\}$ such that $\{x_{n}\}$ converges to $x$ and $\{x_{n}:
n\in\mathbb{N}\}\subset A$.

  \begin{definition}\cite{B2016}
  Let $X$ be a topological space.

  $\bullet$ A subset $U$ of $X$ is called {\it $\mathbb{R}$-open} if for each point $x\in U$ there is a continuous function $f: X\rightarrow [0, 1]$ such that $f(x)=1$ and $f(X\setminus U)\subset\{0\}$. It is obvious that each $\mathbb{R}$-open is open. The converse is true for the open subsets of Tychonoff spaces.

  $\bullet$ A subset $U$ of $X$ is called a {\it functional neighborhood} of a set $A\subset X$ if there is a continuous function $f: X\rightarrow [0, 1]$ such that $f(A)\subset\{1\}$ and $f(X\setminus U)\subset\{0\}$. If $X$ is normal, then each neighborhood of a closed subset $A\subset X$ is functional.
  \end{definition}

  \begin{definition}
  Let $\lambda$ be a cardinal. An indexed family $\{X_{\alpha}\}_{\alpha\in\lambda}$ of subsets of a topological space $X$ is called

 $\bullet$ {\it point-countable} if for any point $x\in X$ the set $\{\alpha\in\lambda: x\in X_{\alpha}\}$ is countable in $X$;

 $\bullet$ {\it compact-countable} if for any compact subset $K$ in $X$ the set $\{\alpha\in\lambda: K\cap X_{\alpha}\neq\emptyset\}$ is countable in $X$;

  $\bullet$ {\it locally finite} if any point $x\in X$ has a neighborhood $O_{x}\subset X$ such that the set $\{\alpha\in\lambda: O_{x}\cap X_{\alpha}\neq\emptyset\}$ is finite in $X$;

  $\bullet$ {\it compact-finite} in $X$ if for each compact subset $K\subset X$ the set $\{\alpha\in\lambda: K\cap X_{\alpha}\neq\emptyset\}$ is finite in $X$;

  $\bullet$ {\it strongly compact-finite} \cite{B2016} in $X$ if each set $X_{\alpha}$ has an $\mathbb{R}$-open neighborhood $U_{\alpha}\subset X$ such that the family $\{U_{\alpha}\}_{\alpha\in\lambda}$ is compact-finite in $X$;

  $\bullet$ {\it strictly compact-finite} \cite{B2016} in $X$ if each set $X_{\alpha}$ has a functional neighborhood $U_{\alpha}\subset X$ such that the family $\{U_{\alpha}\}_{\alpha\in\lambda}$ is compact-finite in $X$.
  \end{definition}

 \begin{definition}\cite{B2016}
  Let $X$ be a topological space and $\lambda$ be a cardinal. An indexed family $\{F_{\alpha}\}_{\alpha\in\lambda}$ of subsets of a topological space $X$ is called a {\it fan} (more precisely, a $\lambda$-fan) in $X$ if this family is compact-finite but not locally finite in $X$. A fan $\{F_{\alpha}\}_{\alpha\in\lambda}$ is called {\it strong} (resp. {\it strict}) if each set $F_{\alpha}$ has a $\mathbb{R}$-open neighborhood (resp. functional neighborhood) $U_{\alpha}\subset X$ such that the family $\{U_{\alpha}\}_{\alpha\in\lambda}$ is compact-finite in $X$.

  If all the sets $F_{\alpha}$ of a $\lambda$-fan $\{F_{\alpha}\}_{\alpha\in\lambda}$ belong to some fixed family $\mathscr{F}$ of subsets of $X$, then the fan will be called an {\it $\mathscr{F}^{\lambda}$-fan}. In particular, if each $F_{\alpha}$ is closed in $X$, then the fan will be called a {\it Cld$^{\lambda}$-fan}.
  \end{definition}

  Clearly, we have the following implications:

  $$\mbox{strict fan}\ \Rightarrow\ \mbox{strong fan}\ \Rightarrow\ \mbox{fan}.$$

  \medskip
  Let $\mathscr P$ be a family of subsets of a space $X$. Then, $\mathscr P$ is called a {\it $k$-network}
   if for every compact subset $K$ of $X$ and an arbitrary open set
  $U$ containing $K$ in $X$ there is a finite subfamily $\mathscr {P}^{\prime}\subseteq
  \mathscr {P}$ such that $K\subseteq \bigcup\mathscr {P}^{\prime}\subseteq U$. Recall that a space $X$ is an \emph{$\aleph$-space} (resp.
  {\it $\aleph_{0}$-space}) if
  $X$ has a $\sigma$-locally finite (resp. countable) $k$-network. Recall that a space $X$ is said to be
  \emph{La\v{s}nev} if it is the continuous closed image of some metric space.

 \begin{definition}\cite{C1961}
A topological space $X$ is a {\it stratifiable space} if for each open subset $U$ in $X$, one can assign a sequence $\{U_{n}\}_{n=1}^{\infty}$ of open subsets of $X$ such that

(a) $\overline{U_{n}}\subset U$;

(b) $\bigcup_{n=1}^{\infty}U_{n}=U;$

(c) $U_{n}\subset V_{n}$ whenever $U\subset V$.
\end{definition}

{\bf Note:} Each La\v{s}nev space is stratifiable \cite{Gr1984}.

  \medskip
  Let $X$ be a non-empty space. Throughout this paper, $X^{-1}=\{x^{-1}: x\in X\}$, which is just a copy of
  $X$. For every $n\in\mathbb{N}$, the $F_{n}(X)$ denotes the
  subspace of $F(X)$ that consists of all the words of reduced length at
  most $n$ with respect to the free basis $X$. Let $e$ be the neutral element of $F(X)$, that is, the empty
  word. For every $n\in\N$, an element
  $x_{1}x_{2}\cdots x_{n}$ is also called a {\it form} for $(x_{1}, x_{2},
  \cdots, x_{n})\in(X\bigoplus X^{-1}\bigoplus\{e\})^{n}$. The word $g$ is
  called {\it reduced} if it does not contain $e$ or any pair of
  consecutive symbol of the form $xx^{-1}$. It follows
  that if the word $g$ is reduced and non-empty, then it is different
  from the neutral element $e$ of $F(X)$. In particular, each element
  $g\in F(X)$ distinct from the neutral element can be uniquely written
  in the form $g=x_{1}^{\epsilon_{1}}x_{2}^{\epsilon_{2}}\cdots x_{n}^{\epsilon_{n}}$, where
  $n\geq 1$, $\epsilon_{1}\in\{-1, 1\}$, $x_{i}\in X$, and the {\it support}
  of $g=x_{1}^{\epsilon_{1}}x_{2}^{\epsilon_{2}}\cdots x_{n}^{\epsilon_{n}}$ is defined as
  $\mbox{supp}(g)=\{x_{1}, \cdots, x_{n}\}$. Given a subset $K$ of
  $F(X)$, we put $\mbox{supp}(K)=\bigcup_{g\in K}\mbox{supp}(g)$. For every $n\in\mathbb{N}$, let $$i_n: (X\bigoplus X^{-1}
  \bigoplus\{e\})^{n} \to F_n(X)$$ be the natural mapping defined by
  $$i_n(x_1, x_2, ... x_n)= x_1x_2\cdots x_n$$
  for each $(x_1, x_2, \cdots, x_n) \in (X\bigoplus -X
  \bigoplus\{0\})^{n}$.

\maketitle
\section{The $k_{R}$-property on free topological groups}
In this section, we investigate the
  $k_{R}$-property on free topological groups, and generalize some results of K. Yamada's. Recently, T. Banakh in \cite{B2016} proved that $F(X)$ is a $k$-space if $F(X)$ is a $k_{R}$-space for a La\v{s}nev space $X$. Indeed, he obtained this result in the class of more weaker spaces. However, he did not discuss the following quesiton:

\begin{question}
Let $X$ be a space. For some $n\in\omega$, if $F_{n}(X)$ is a $k_{R}$-space, is $F_{n}(X)$ a $k$-space?
\end{question}

First, we give the following Theorem~\ref{KR}, which gives a complementary for the result of T. Banakh's.

 \begin{lemma}\label{k-R and k}
 Let $F(X)$ be a $k_{R}$-space. If each $F_{n}(X)$ is a normal $k$-space, then $F(X)$ is a $k$-space.
 \end{lemma}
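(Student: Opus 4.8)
The plan is to prove the contrapositive-free version directly: I will show that any $A\subseteq F(X)$ meeting every compact set in a closed set is itself closed, by separating $A$ from an alleged limit point with a continuous function built level-by-level along the filtration $\{F_n(X)\}$ and certified continuous through the $k_{R}$-property of $F(X)$.

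First I would record two standard structural facts about the free topological group (see \cite{AT2008}): each $F_n(X)$ is closed in $F(X)$, and every compact subset of $F(X)$ is contained in some $F_n(X)$. Let $A\subseteq F(X)$ satisfy that $A\cap K$ is closed for every compact $K$. Combining the second fact with the hypothesis that $F_n(X)$ is a $k$-space, I get that $A\cap F_n(X)$ is closed in $F_n(X)$: for any compact $K\subseteq F_n(X)$ the set $(A\cap F_n(X))\cap K=A\cap K$ is closed, and the $k$-space property upgrades this to closedness of $A\cap F_n(X)$ in $F_n(X)$. Since $F_n(X)$ is closed in $F(X)$, the set $A\cap F_n(X)$ is then closed in $F(X)$ as well.

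Next, suppose towards a contradiction that $A$ is not closed, and fix $p\in\overline{A}\setminus A$ together with an index $m$ with $p\in F_m(X)$. I would construct by induction on $n\ge m$ a tower of continuous functions $f_n\colon F_n(X)\to[0,1]$ with $f_n|_{F_{n-1}(X)}=f_{n-1}$, with $f_n(A\cap F_n(X))\subseteq\{0\}$, and with $f_n(p)=1$. For the base step, normality of $F_m(X)$ separates the disjoint closed sets $A\cap F_m(X)$ and $\{p\}$ by a Urysohn function. For the inductive step, the set $C=F_n(X)\cup(A\cap F_{n+1}(X))$ is closed in $F_{n+1}(X)$, and the function equal to $f_n$ on $F_n(X)$ and to $0$ on $A\cap F_{n+1}(X)$ is well defined, since the two pieces agree on the overlap $A\cap F_n(X)$, where $f_n$ vanishes; it is continuous on $C$ by the gluing lemma for closed sets, and the Tietze extension theorem, available because $F_{n+1}(X)$ is normal, extends it to the desired $f_{n+1}$. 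Note $p\in F_m(X)\subseteq F_n(X)$ throughout, so $f_{n+1}(p)=f_n(p)=1$ is preserved.

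Finally, the compatibility condition lets me define $f=\bigcup_{n\ge m}f_n$ on $F(X)=\bigcup_n F_n(X)$, a function with $f(A)\subseteq\{0\}$ and $f(p)=1$. Because every compact $K\subseteq F(X)$ lies in some $F_n(X)$, each restriction $f|_K=f_n|_K$ is continuous, so the $k_{R}$-property of $F(X)$ forces $f$ to be continuous on all of $F(X)$; but then $f(A)=\{0\}$ and $f(p)=1$ contradict $p\in\overline{A}$. Hence $A$ is closed and $F(X)$ is a $k$-space. I expect the crux of the argument to be exactly this last interface: the $k_{R}$-property is what converts continuity-on-compacta of the glued $f$ into genuine continuity, since without it the increasing union $\bigcup_n(A\cap F_n(X))$ of closed sets need not be closed; the other delicate point is that the reduction \emph{``$A\cap K$ closed for all compact $K$''} to \emph{``$A\cap F_n(X)$ closed in $F_n(X)$''} leans entirely on compact subsets of $F(X)$ being confined to some $F_n(X)$.
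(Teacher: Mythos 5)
Your argument is correct. The paper's own proof is essentially a two-line citation: it notes that every compact subset of $F(X)$ lies in some $F_n(X)$ (\cite[Corollary 7.4.4]{AT2008}) and then invokes Lemma~2 of \cite{L2006}, which is exactly the statement that a $k_R$-space covered by an increasing sequence of closed normal $k$-subspaces capturing all compact sets is a $k$-space. What you have written is a complete, self-contained proof of that cited lemma in the special case at hand: reduce ``$A\cap K$ closed for all compact $K$'' to ``$A\cap F_n(X)$ closed in $F_n(X)$'' via confinement of compacta, build the tower of Urysohn--Tietze extensions $f_n$ separating $A$ from a putative limit point $p$, and let the $k_R$-property certify continuity of the glued function. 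All the steps check out (in the base step plain complete regularity of $F_m(X)$ would already suffice to separate $p$ from the closed set $A\cap F_m(X)$, though normality of course works); the one thing your write-up adds over the paper is precisely the content the paper outsources to \cite{L2006}.
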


 \begin{proof}
 It is well-known that each compact subset of $F(X)$ is contained in some $F_{n}(X)$ \cite[Corollary 7.4.4]{AT2008}. Hence it follows from \cite[Lemma 2]{L2006} that $F(X)$ is a $k$-space.
 \end{proof}

\begin{theorem}\label{KR}
Let $X$ be a paracompact $\sigma$-space. Then $F(X)$ is a $k_{R}$-space and each $F_{n}(X)$ is a $k$-space if and only if $F(X)$ is a $k$-space.
\end{theorem}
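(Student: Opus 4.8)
The plan is to establish the two implications of the biconditional separately, routing the substantive direction through Lemma~\ref{k-R and k}. The reverse implication is immediate: if $F(X)$ is a $k$-space, then, being Tychonoff, it is a $k_{R}$-space by the remark that every Tychonoff $k$-space is a $k_{R}$-space; moreover each $F_{n}(X)$ is closed in $F(X)$, and a closed subspace of a $k$-space is again a $k$-space, so every $F_{n}(X)$ is a $k$-space. Note that the hypothesis that $X$ is a paracompact $\sigma$-space is not needed for this direction.

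For the forward implication, assume $F(X)$ is a $k_{R}$-space and each $F_{n}(X)$ is a $k$-space. By Lemma~\ref{k-R and k} it suffices to verify that each $F_{n}(X)$ is \emph{normal}; granting this, the lemma yields at once that $F(X)$ is a $k$-space. Thus the entire weight of the argument falls on the single structural claim that, for a paracompact $\sigma$-space $X$, every $F_{n}(X)$ is normal. I would in fact aim for the sharper statement that each $F_{n}(X)$ is a paracompact $\sigma$-space, from which normality is automatic since paracompact Hausdorff spaces are normal.

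To prove that $F_{n}(X)$ is a paracompact $\sigma$-space I would work through the natural continuous surjection $i_{n}\colon (X\oplus X^{-1}\oplus\{e\})^{n}\to F_{n}(X)$. Its domain is a finite product of copies of $X$, $X^{-1}$ and a point, hence a paracompact $\sigma$-space, so it carries a $\sigma$-locally finite network. The hard part is that $i_{n}$ is not closed in general, so one cannot simply transport these properties to the quotient. The standard remedy is to exploit the fine structure of $F_{n}(X)$: on the set of reduced words of length exactly $n$ the map $i_{n}$ is injective and a relative homeomorphism, so $F_{n}(X)\setminus F_{n-1}(X)$ behaves like a subspace of the domain, and one builds a $\sigma$-locally finite network on $F_{n}(X)$ by assembling such networks over the strata $F_{m}(X)\setminus F_{m-1}(X)$ for $m\le n$. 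The genuinely delicate point is the behaviour of the topology across these strata, where cancellation of letters occurs; here I would invoke the assumed $k$-space property of $F_{n}(X)$ together with the fact that every compact subset of $F(X)$ lies in some $F_{m}(X)$, which lets one check the network and paracompactness conditions compact-set by compact-set rather than globally. I expect this cross-stratum analysis, rather than the group-theoretic bookkeeping, to be the main obstacle; if instead a theorem of the form ``$X$ paracompact $\sigma$-space $\Rightarrow$ $F_{n}(X)$ paracompact $\sigma$-space'' is available from the companion paper or the literature on free topological groups, one simply cites it and the forward implication closes immediately via Lemma~\ref{k-R and k}.
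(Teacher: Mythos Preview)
Your overall structure is exactly the paper's: the reverse implication is trivial, and the forward one is routed through Lemma~\ref{k-R and k}, so everything reduces to showing each $F_{n}(X)$ is normal. Where you diverge is in how you propose to obtain that normality. Your primary plan---building a $\sigma$-locally finite network on $F_{n}(X)$ stratum by stratum via $i_{n}$, and using the assumed $k$-property of $F_{n}(X)$ to control the cross-stratum behaviour---is an unnecessary detour, and the use of the $k$-hypothesis there would be circularly placed, since normality should (and does) come from the assumption on $X$ alone.

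The shortcut you speculate about at the end is in fact what the paper does: \cite[Theorem~7.6.7]{AT2008} gives directly that if $X$ is a paracompact $\sigma$-space then the whole group $F(X)$ is a paracompact $\sigma$-space. Each $F_{n}(X)$ is closed in $F(X)$, hence paracompact, hence normal, and Lemma~\ref{k-R and k} finishes. So your proposal is correct, but the hands-on strata argument can be dropped entirely in favour of this single citation.
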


\begin{proof}
Since $X$ is a paracompact $\sigma$-space, it follows from \cite[Theorem 7.6.7]{AT2008} that $F(X)$ is also a paracompact $\sigma$-space, hence each $F_{n}(X)$ is normal. Now it is legal to apply Lemma~\ref{k-R and k} and finish the proof.
\end{proof}

Next, we shall show that for arbitrary a metrizable space $X$, the $k_{R}$-property of $F_{8}(X)$ implies that $F(X)$ is a $k$-space, see Theorem~\ref{F8}. We first prove two technic propositions. To prove them, we need the description of a neighborhood base of $e$ in $F(X)$ obtained in \cite{U1991}. Let $$H_{0}(X)=\{h=x_{1}^{\varepsilon_{1}}x_{2}^{\varepsilon_{2}}\cdots x_{2n}^{\varepsilon_{2n}}\in F(X): \sum_{i=1}^{2n}\varepsilon_{i}=0, x_{i}\in X\ \mbox{for}\ i\in\{1, 2, \cdots, n\}, n\in\mathbb{N}\}.$$Obviously, the subset $H_{0}(X)$ is a clopen normal subgroup of $F(X)$. It is easy to see that each $h\in H_{0}(X)$ can be represented as $$h=g_{1}x_{1}^{\varepsilon_{1}}y_{1}^{-\varepsilon_{1}}g_{1}^{-1}g_{2}x_{2}^{\varepsilon_{2}}y_{2}^{-\varepsilon_{1}}g_{2}^{-1}\cdots g_{n}x_{n}^{\varepsilon_{n}}y_{n}^{-\varepsilon_{1}}g_{n}^{-1},$$ for some $n\in\mathbb{N}$, where $x_{i}, y_{i}\in X$, $\varepsilon_{i}=\pm 1$ and $g_{i}\in F(X)$ for $i\in\{1, 2, \cdots, n\}$. Let $P(X)$ be the set of all continuous pseudometrics on a space $X$. Then take an arbitrary $r=\{\rho_{g}: g\in F(X)\}\in P(X)^{F(X)}$. Let $$p_{r}(h)=\inf\{\sum_{i=1}^{n}\rho_{g_{i}}(x_{i}, y_{i}): h=g_{1}x_{1}^{\varepsilon_{1}}y_{1}^{-\varepsilon_{1}}g_{1}^{-1}g_{2}x_{2}^{\varepsilon_{2}}y_{2}^{-\varepsilon_{1}}g_{2}^{-1}\cdots g_{n}x_{n}^{\varepsilon_{n}}y_{n}^{-\varepsilon_{1}}g_{n}^{-1}, n\in\mathbb{N}\}$$ for each $h\in H_{0}(X)$. In \cite{U1991},  Uspenskii proved that

\smallskip
(a) $\rho_{r}$ is a continuous on $H_{0}(X)$ and

\smallskip
(b) $\{\{h\in H_{0}(X): p_{r}(h)<\delta\}: r\in P(X)^{F(X)}, \delta>0\}$ is a neighborhood base of $e$ in $F(X)$. Moreover, $p_{r}(e)=0$ for each $r\in P(X)^{F(X)}$.

\begin{proposition}\label{separable}
For a stratifiable $k$-space $X$ if $F_{8}(X)$ is a $k_{R}$-space, then $X$ is separable or discrete.
\end{proposition}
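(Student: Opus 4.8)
The plan is to prove the contrapositive-style dichotomy: assume $X$ is a stratifiable $k$-space that is *neither* separable *nor* discrete, and derive that $F_8(X)$ fails to be a $k_R$-space. The strategy is to locate, inside $F_8(X)$, a concrete family of subsets that forms a fan (compact-finite but not locally finite) together with a "target" point whose every neighborhood meets infinitely many members of the family, and then to manufacture from this fan a real-valued function on $F_8(X)$ that is continuous on every compact set but discontinuous globally. This is the standard route for defeating the $k_R$-property: a $k_R$-space cannot carry such a separating function.

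**First I would** exploit the two hypotheses to extract combinatorial data. Since $X$ is not discrete, $\mathrm{NI}(X)\neq\emptyset$; pick a non-isolated point and, using that $X$ is a $k$-space (hence sequential for stratifiable spaces), a nontrivial convergent sequence $x_n\to x_*$. Since $X$ is not separable, no countable set is dense, so I can build by transfinite recursion an uncountable family of points (or of disjoint convergent sequences) that is "spread out," i.e. closed-discrete in a suitable sense and indexed by $\omega_1$. The stratifiability of $X$ gives a $\sigma$-discrete network (indeed $X$ is an $\aleph$-space in the La\v{s}nev case, but here I only need stratifiability to get enough separation: closed sets have functional neighborhoods locally, and one can refine covers). **Next I would** transport this data into $F_8(X)$ via words of the form $g_\alpha = x_\alpha\, a_n\, a_m^{-1}\, y_\alpha^{-1}$ or similar length-$\le 8$ words, choosing the factors so that the resulting family $\{F_\alpha\}_{\alpha<\omega_1}$ is compact-finite (each compact $K\subset F_8(X)$ lies in some $F_n(X)$ with $\mathrm{supp}(K)$ compact, hence meets only countably — in fact, by the spreading, finitely — many $F_\alpha$) yet accumulates at a single word $w_0$, giving a genuine $\omega_1$-fan.

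**The hard part will be** two coupled technical points. The first is the *compact-finiteness* verification: I must show that any compact $K\subseteq F_8(X)$ meets only finitely many $F_\alpha$. This rests on the fact that $\mathrm{supp}(K)$ is compact in $X$ (a consequence of \cite[Corollary 7.4.4]{AT2008} together with continuity of the support map on each $F_n$), so $\mathrm{supp}(K)$ can meet the spread-out $\omega_1$-indexed points only in a compact-bounded, hence "small," way — and this is exactly where non-separability is used to guarantee the index set stays uncountable while each compact slice stays finite. The second, and genuinely delicate, point is the *construction of the bad function*. Having a fan is not by itself enough to kill the $k_R$-property; I need the fan to be *strict* (or at least strong), so that each $F_\alpha$ admits a functional neighborhood $U_\alpha$ with $\{U_\alpha\}$ still compact-finite. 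This is precisely where the neighborhood-base description of Uspenskii enters: using the pseudometrics $r=\{\rho_g\}$ and the gauge $p_r$, I would build, for each $\alpha$, a continuous $f_\alpha\colon F_8(X)\to[0,1]$ separating $F_\alpha$ from the rest, then assemble $f=\sum_\alpha f_\alpha$ (or $\sup_\alpha \alpha^{-1}f_\alpha$-type aggregate). Compact-finiteness of the $U_\alpha$ makes $f$ continuous on each compact set (only finitely many summands are nonzero there), while the accumulation at $w_0$ forces $f$ to be discontinuous at $w_0$. That contradiction with the $k_R$-property of $F_8(X)$ completes the proof, so $X$ must after all be separable or discrete.
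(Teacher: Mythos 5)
Your high-level strategy (reduce to finding a strict $\mathrm{Cld}$-fan in $F_8(X)$, extract a convergent sequence from non-discreteness and an uncountable closed discrete set $D=\{d_\alpha\}_{\alpha<\omega_1}$ from non-separability, and use compactness of $\overline{\mathrm{supp}(K)}$ for compact $K$) matches the paper's opening moves. But the actual fan is never constructed, and the shape you propose --- an $\omega_1$-indexed family of \emph{single} words $g_\alpha$ accumulating at some $w_0$ --- cannot work. To witness accumulation at a point of $F(X)$ (the natural candidate is $e$) one must beat every basic neighborhood $\{h: p_r(h)<\delta\}$ from Uspenskii's description, where $r=\{\rho_g\}$ assigns a pseudometric to \emph{each} $g\in F(X)$. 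If each $F_\alpha$ is a single word such as $d_\alpha x_{n(\alpha)}x^{-1}d_\alpha^{-1}$ determined by a fixed choice function $n(\cdot)$, an adversary simply chooses $\rho_{d_\alpha}$ \emph{after} seeing $n(\alpha)$ so that $\rho_{d_\alpha}(x_{n(\alpha)},x)$ is large for every $\alpha$; then $\{p_r<\delta\}$ misses the whole family and there is no accumulation, hence no fan. Conversely, if you let $n$ range freely to restore accumulation, the family ceases to be compact-finite: for fixed $\alpha$ the set $\{d_\alpha x_n x^{-1}d_\alpha^{-1}: n\in\omega\}\cup\{e\}$ is compact and meets infinitely many members. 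So the two requirements pull in opposite directions, and your proposal does not resolve the tension.

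The paper's construction is precisely the device that resolves it, and it is the mathematical content of the proposition: one fixes bijections $f_\alpha\restriction\alpha:\alpha\to\omega$ and forms the $\omega$-indexed family
$F_n=\{d_\beta^{-1}x^{-1}x_m d_\beta\, d_\alpha x_n x^{-1}d_\alpha^{-1}: f_\alpha(\beta)=n,\ m\le n\}$
of length-$8$ words (this is why the statement is about $F_8$). Each $F_n$ is an \emph{uncountable} set; the constraint $f_\alpha(\beta)=n$ ties the index $n$ to the pair $(\alpha,\beta)$, so a compact $K$, whose support meets only finitely many of the discrete neighborhoods $O_\alpha$, can meet only finitely many $U_n$ --- this gives (strong) compact-finiteness. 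Non-local-finiteness at $e$ then needs an uncountable pigeonhole: for given $r$ and $\delta$ one finds $n_0$ and an uncountable $A\subset\omega_1$ with $\rho_{d_\alpha^{\pm 1}}(x_n,x)<\delta/2$ for all $n\ge n_0$, $\alpha\in A$, and picks $\alpha\in A$ with infinitely many predecessors in $A$ so that $f_\alpha(\alpha\cap A)$ is infinite, producing a word of $F_m$ with $p_r<\delta$ for some large $m$. Your sketch also conflates two different steps: producing functional neighborhoods $U_\alpha$ (which is cheap here, since $F_8(X)$ is paracompact, hence normal, so strong fans are strict) versus verifying non-local-finiteness (the genuinely delicate step, which your text leaves as ``accumulates at a single word $w_0$'' without argument). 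As written, the proof has a genuine gap at its core.
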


\begin{proof}
Assume on the contrary that $X$ is neither separable nor discrete. Then $X$ contains a space $Y=C\bigoplus D$ as a closed subset, where $C=\{x_{n}: n\in\omega\}\cup\{x\}$ is a convergent sequence with its limit point $\{x\}$ and $D=\{d_{\alpha}: \alpha\in\omega_{1}\}$ is a discrete closed subset of $X$. Since $D$ is a discrete closed subset of $X$, we choose a discrete family $\{O_{\alpha}\}_{\alpha\in\omega_{1}}$ of open subsets such that $d_{\alpha}\in O_{\alpha}$ for each $\alpha\in\omega_{1}$. We may assume that $x_{n}\neq x_{m}$ for arbitrary $n\neq m$ and $C\cap \bigcup_{\alpha\in\omega_{1}}O_{\alpha}=\emptyset$. Since $X$ is stratifiable and $Y$ is closed in $X$, it follows from \cite{U1991} that $F(Y)$ is homeomorphic to a closed subgroup of $F(X)$. Hence $F_{8}(Y)$ is closed subspace of $F_{8}(X)$. Next we shall show that $F_{8}(X)$ contains a strict Cld$^{\omega}$-fan, a contradiction. Indeed, since $F_{8}(X)$ is normal, it suffices to construct a strong Cld$^{\omega}$-fan in $F_{8}(X)$.

For each $\alpha\in\omega_{1}$ choose a function $f_{\alpha}: \omega_{1}\rightarrow\omega$ such that $f_{\alpha}|_{\alpha}: \alpha\rightarrow\omega$ is a bijection. For each $n\in\omega$, let $$F_{n}=\{d_{\beta}^{-1}x^{-1}x_{m}d_{\beta}d_{\alpha}x_{n}x^{-1}d_{\alpha}^{-1}: f_{\alpha}(\beta)=n, \alpha, \beta\in\omega_{1}, m\leq n\}.$$We claim that the family $\{F_{n}: n\in\omega\}$ is a strong Cld$^{\omega}$-fan in $F_{8}(X)$. We divide into the proof by the following three statements.

\smallskip
(1) For each $n\in\omega$, the set $F_{n}$ is closed in $F_{8}(X)$.

\smallskip
Fix an arbitrary $n\in\omega$. It suffices to show that the set $F_{n}$ is closed in $F_{8}(Y)$. Let $Z=\mbox{supp}{F_{n}}.$ Then it is obvious that $Z$ is a closed discrete subspace of $Y$. It follows from \cite{U1991} that $F(Z)$ is homeomorphic to a closed subgroup of $F(Y)$, and thus $F_{8}(Z)$ is a closed subspace of $F_{8}(Y)$. Since $F(Z)$ is discrete and $F_{n}\subset F_{8}(Z)$, the set $F_{n}$ is closed in $F_{8}(Y)$ (and thus closed in $F(X)$).

\smallskip
(2) The family $\{F_{n}: n\in\omega\}$ is strong compact-finite in $F_{8}(X)$.

\smallskip
By induction, choose two families of open neighborhoods $\{W_{n}\}_{n\in\omega}$ and $\{V_{n}\}_{n\in\omega}$ in $X$ satisfy the following conditions:

 \smallskip
(a) for each $n\in\omega$, $x_{n}\in W_{n}$;

\smallskip
(b) for each $n\in\omega$, $x\in V_{n}$  and $V_{n+1}\subset V_{n}$;

\smallskip
(c) for each $n\in\omega$, we have $W_{n}\cap (C\cup D)=\{x_{n}\}$, $V_{n}\cap (D\cup W_{n})=\emptyset$ and $V_{n}\cap C\subset C_{n}$, where $C_{n}=\{x_{m}: m>n\}\cup\{x\}$;

\smallskip
(d) $V_{1}\cap \bigcup_{\alpha\in\omega_{1}}O_{\alpha}=\emptyset$ and $W_{n}\cap \bigcup_{\alpha\in\omega_{1}}O_{\alpha}=\emptyset$ for each $n\in\omega$.

For each $n\in\omega$, let $$U_{n}=\bigcup\{O_{\beta}^{-1}V_{n}^{-1}W_{m}O_{\beta}O_{\alpha}W_{n}V_{n}^{-1}O_{\alpha}^{-1}: f_{\alpha}(\beta)=n, \alpha, \beta\in\omega_{1}, m\leq n\}.$$Obviously, each $U_{n}$ contains in $F_{8}(X)\setminus F_{7}(X)$, and since $F_{8}(X)\setminus F_{7}(X)$ is open in $F_{8}(X)$, it follows from \cite[Corollary 7.1.19]{AT2008} that each $U_{n}$ is open in $F_{8}(X)$. We claim that the family $\{U_{n}: n\in\omega\}$ is compact-finite in $F_{8}(X)$. Suppose not, there exist a compact subset $K$ in $F_{8}(X)$ and an increasing sequence $\{n_{k}\}$ such that $K\cap U_{n_{k}}\neq\emptyset$ for each $k\in\omega$. Since $X$ is stratifiable, $F(X)$ is paracompact, then the closure of the set $\mbox{supp}(K)$ is compact in $X$. However, for each $k\in\omega$, since $K\cap U_{n_{k}}\neq\emptyset$, there exist $m_{k}\in\omega$, $\alpha_{k}\in \omega_{1}$ and $\beta_{k}\in \omega_{1}$ such that $f_{\alpha_{k}}(\beta_{k})=n_{k}$ and $K\cap O_{\beta_{k}}^{-1}V_{n_{k}}^{-1}W_{m_{k}}O_{\beta_{k}}O_{\alpha_{k}}W_{n_{k}}V_{n_{k}}^{-1}O_{\alpha_{k}}^{-1}\neq\emptyset$, hence $\mbox{supp}(K)\cap O_{\alpha_{k}}\neq\emptyset$ and $\mbox{supp}(K)\cap O_{\beta_{k}}\neq\emptyset$. Therefore, the set $\mbox{supp}(K)$ intersects each element of the family $\{O_{\alpha_{k}}, O_{\beta_{k}}: k\in\omega\}$. Since the set $\{n_{k}: k\in\omega\}$ is infinite and $f_{\alpha_{k}}(\beta_{k})=n_{k}$ for each $k\in\omega$, the set $\{\alpha_{k}, \beta_{k}: k\in\omega\}$ is infinite.
Then $\mbox{supp}(K)$ intersects infinitely many $O_{\alpha}$'s, which is a contradiction with the compactness of $\mbox{supp}(K)$. Therefore, the family $\{U_{n}: n\in\omega\}$ is compact-finite in $F_{8}(X)$. Therefore, the family $\{F_{n}: n\in\omega\}$ is strong compact-finite in $F_{8}(X)$.

\smallskip
(3) The family $\{F_{n}: n\in\omega\}$ is not locally finite at the point $e$ in $F_{8}(Y)$ (and thus not locally finite in $F_{8}(X)$).

\smallskip
Indeed, it suffices to show that $e\in \overline{\bigcup_{n\in\omega}F_{n}}\setminus\bigcup_{n\in\omega}F_{n}$ in $F_{8}(Y)$. For any $\delta>0$ and $r=\{\rho_{g}: g\in F(Y)\}$, we shall show that $$\{h\in F_{8}(Y): p_{r}(h)<\delta\}\cap \bigcup_{n\in\omega}F_{n}\neq\emptyset.$$ Since the sequence $\{x_{n}\}$ converges to $x$ and $\rho_{d_{\alpha}}$ and $\rho_{d_{\alpha}^{-1}}$ are continuous pseudometrics on $Y$ for each $\alpha\in\omega_{1}$, there is $n(\alpha)\in\omega$ such that $\rho_{d_{\alpha}}(x_{n}, x)<\frac{\delta}{2}$ and $\rho_{d_{\alpha}^{-1}}(x_{n}, x)<\frac{\delta}{2}$ for each $n\geq n(\alpha)$. Therefore, there are $n_{0}\in\omega$ and uncountable set $A\subset \omega_{1}$ such that $\rho_{d_{\alpha}}(x_{n}, x)<\frac{\delta}{2}$ and $\rho_{d_{\alpha}^{-1}}(x_{n}, x)<\frac{\delta}{2}$ for each $n\geq n_{0}$ and $\alpha\in A$. Choose $\alpha\in A$ that has infinitely many predecessors in $A$. Since $f_{\alpha}(\alpha\cap A)$ is an infinite set, there exist $m>n_{0}$ and $\beta\in\alpha\cap A$ such that $f_{\alpha}(\beta)=m$. Then the word $$g=d_{\beta}^{-1}x^{-1}x_{n_{0}}d_{\beta}d_{\alpha}x_{f_{\alpha}(\beta)}x^{-1}d_{\alpha}^{-1}\in F_{f_{\alpha}(\beta)}=F_{m}.$$Furthermore, we have $$p_{r}(g)=p_{r}(d_{\beta}^{-1}x^{-1}x_{n_{0}}d_{\beta}d_{\alpha}x_{f_{\alpha}(\beta)}x^{-1}d_{\alpha}^{-1})\leq \rho_{d_{\beta}^{-1}}(x_{n_{0}}, x)+\rho_{d_{\alpha}}(x_{m}, x)<\frac{\delta}{2}+\frac{\delta}{2}=\delta.$$Hence $e\in\overline{\bigcup_{n\in\omega}F_{n}}$. Then The family $\{F_{n}: n\in\omega\}$ is not locally finite at the point $e$ in $F_{8}(Y)$.
\end{proof}

\begin{proposition}\label{p1}
For a metrizable space $X$ if $F_{8}(X)$ is a $k_{R}$-space, then $X$ is locally compact.
\end{proposition}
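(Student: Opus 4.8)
The plan is to argue by contradiction, in exact parallel with the proof of Proposition~\ref{separable}: assuming $F_{8}(X)$ is a $k_{R}$-space while $X$ is not locally compact, I will exhibit a strict $\mathrm{Cld}^{\omega}$-fan in $F_{8}(X)$, which is impossible in a $k_{R}$-space. First I would set up the reduction. Since $X$ is metrizable it is a stratifiable $k$-space, so Proposition~\ref{separable} applies and yields that $X$ is separable or discrete. A discrete space is locally compact, so suppose toward a contradiction that $X$ is not locally compact; then $X$ is non-discrete, whence it is separable. Fix a point $x_{0}$ at which local compactness fails; as an isolated point would have the compact neighbourhood $\{x_{0}\}$, the point $x_{0}$ is non-isolated and carries a nontrivial sequence $\{x_{m}\}\to x_{0}$ with the $x_{m}$ distinct and different from $x_{0}$.

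Next I would extract the witnessing data from the failure of local compactness. Fix a compatible metric $d$ on $X$. For every $\varepsilon>0$ the closed ball $\overline{B(x_{0},\varepsilon)}$ is non-compact, hence not sequentially compact, so it contains a sequence with no convergent subsequence; the range of such a sequence is an infinite subset that is closed and discrete in $X$ and lies in $B(x_{0},\varepsilon)$. Working in balls of radius $2^{-k}$ and using that $X$ is separable metric, I would organize these sets into a countable family $\{d_{n}\}_{n\in\omega}$ of witnesses together with a family $\{O_{n}\}$ of open sets, discrete in $X$ away from $x_{0}$, with $d_{n}\in O_{n}$, all disjoint from a fixed neighbourhood structure of $\{x_{m}\}\cup\{x_{0}\}$. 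Let $Y$ be the subspace (closed in $X$) assembled from $x_{0}$, the sequence $\{x_{m}\}$ and the witnesses. Since $X$ is stratifiable, Uspenskii's theorem \cite{U1991} makes $F(Y)$ a closed topological subgroup of $F(X)$, so $F_{8}(Y)$ is closed in $F_{8}(X)$; I will also use the neighbourhood base $\{\,h\in H_{0}(X):p_{r}(h)<\delta\,\}$ of $e$ recalled before Proposition~\ref{separable}.

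Then, imitating the commutator words of Proposition~\ref{separable}, for each $n\in\omega$ I would define a set $F_{n}$ of reduced words of length at most $8$ whose two conjugating letters are drawn from the witnesses and whose short middle factors have the form $x_{0}^{-1}x_{m}$ with the index $m$ bounded in terms of $n$. Three verifications are then required, exactly as in Proposition~\ref{separable}: (1) each $F_{n}$ is closed in $F_{8}(X)$, because $\mathrm{supp}\,F_{n}$ is finite, so $F(\mathrm{supp}\,F_{n})$ is discrete and $F_{n}\subseteq F_{8}(\mathrm{supp}\,F_{n})$ is closed; (2) the family is strong compact-finite, with open neighbourhoods $U_{n}\subseteq F_{8}(X)\setminus F_{7}(X)$ built from $\{O_{n}\}$ and from open sets around $x_{0}$ and the $x_{m}$, these $U_{n}$ being open by \cite[Corollary 7.1.19]{AT2008} and compact-finite because $\overline{\mathrm{supp}\,K}$ is compact for every compact $K$ while $\{O_{n}\}$ is locally finite off $x_{0}$; and (3) the family is not locally finite at $e$, for which it suffices, as in Proposition~\ref{separable}(3), to show $e\in\overline{\bigcup_{n}F_{n}}\setminus\bigcup_{n}F_{n}$, i.e.\ that every basic neighbourhood $\{p_{r}<\delta\}$ of $e$ contains a word from some $F_{n}$. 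Once (1)--(3) hold, $\{F_{n}\}$ is a strong, hence by normality of $F_{8}(X)$ strict, $\mathrm{Cld}^{\omega}$-fan, contradicting that $F_{8}(X)$ is a $k_{R}$-space \cite{B2016}, and therefore $X$ is locally compact.

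The main obstacle is step (3). In Proposition~\ref{separable} the accumulation of $\bigcup_{n}F_{n}$ at $e$ was produced by an uncountable pigeonhole: among the uncountably many conjugators $d_{\alpha}$ one finds a single tail index $n_{0}$ and an uncountable set $A$ on which $\rho_{d_{\alpha}}(x_{m},x)<\delta/2$ for all $m\ge n_{0}$, and the functions $f_{\alpha}$ then realize arbitrarily large fan-indices. Here $X$ is separable, so no uncountable discrete set of conjugators is available and this argument breaks down; a single adversarial choice of pseudometrics $r=\{\rho_{g}\}$ can keep every word of the naive length-$4$ family bounded away from $e$. The real work is therefore to exploit the \emph{metric} witnessing of non-local-compactness — that non-compact closed discrete sets occur inside balls of radius $2^{-k}\to 0$ about $x_{0}$ — so as to couple the conjugators to the convergent sequence through their common limit $x_{0}$, forcing $p_{r}$-smallness for infinitely many indices by continuity of each $\rho_{g}$ at $x_{0}$, independently of $r$. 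Reconciling this clustering (needed for (3)) with the separation of $\{O_{n}\}$ (needed for the compact-finiteness in (2)) is the delicate balance on which the argument turns, and is precisely where metrizability of $X$, rather than mere stratifiability, is used.
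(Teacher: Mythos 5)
Your proposal is incomplete at exactly the point that carries the whole proposition. Steps (1) and (2) of your outline are routine, but step (3) --- that the family is not locally finite --- is never actually established: you describe it as ``the main obstacle'' and ``the real work,'' observe correctly that the uncountable pigeonhole from Proposition~\ref{separable} is unavailable because the conjugators are now only countably many, and then leave the resolution as a ``delicate balance'' to be struck. That is not a proof but a statement of the difficulty. Moreover, the difficulty is real for the construction you propose: if the fan is to accumulate at $e$, then for a basic neighborhood $\{p_r<\delta\}$ one must beat an adversarial choice of $r=\{\rho_g\}$, and with countably many conjugators $d_n$ the adversary can choose each $\rho_{d_n}$ (and $\rho_{d_n^{-1}}$) so that $\rho_{d_n}(x_m,x_0)\ge\delta$ for all $m$ up to whatever threshold your indexing of $F_n$ forces; continuity of $\rho_{d_n}$ at $x_0$ only gives a bound $N(d_n)$ depending on $d_n$, which is precisely what a diagonalization defeats. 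It is not evident that a fan at $e$ of the commutator type you describe exists at all here.

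The paper sidesteps both problems. First, instead of accumulating at $e$, it uses the standard closed hedgehog $J=\{x\}\cup\bigl(\bigcup_{n}X_n\bigr)\cup\{z_n:n\in\omega\}$ witnessing failure of local compactness, takes the singletons $E_{n,j}=\{z_n^{-1}y_j^{-1}xz_nx_{n,j}\}$, and obtains non-local-finiteness \emph{at the point $x$} by citing the computation in the proof of \cite[Proposition 2.1]{Y2005}; no estimate against arbitrary pseudometrics $p_r$ is needed. Second, it dispenses with your neighborhoods $U_n$ entirely: since $X$ is separable (by Proposition~\ref{separable}) and metrizable, $F_8(X)$ is an $\aleph_0$-space by \cite[Theorem 4.1]{AOP1989}, hence normal, and by \cite[Proposition 2.9.2]{B2016} every compact-finite family in it is automatically strongly (hence strictly) compact-finite --- so only plain compact-finiteness of $\{E_{n,j}\}$ must be checked, which is the easy support argument you already have. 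To repair your write-up you should either import Yamada's argument for the accumulation at $x$ or supply a genuinely new estimate for accumulation at $e$; as it stands the key claim is unproved.
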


\begin{proof}
Assume on the contrary that $X$ is not locally compact. Then there exists a closed hedgehog subspace $$J=\{x\}\cup(\bigcup_{n\in\omega}X_{n})\cup\{z_{n}: n\in\omega\}$$ such that

(1) $X_{n}=\{y_{n}\}\cup\{x_{n, j}: j\in\omega\}$ is a closed discrete subset of $J$ for each $n\in\omega$;

(2) $\{z_{n}: n\in\omega\}$ is a closed discrete subset of $J$; and

(3) $\{\{x\}\cup\bigcup_{n\geq k}X_{n}: k\in\omega\}$ is a neighborhood base of $x$ in $J$.

By Proposition~\ref{separable}, the space $X$ is separable. Next we shall show that $F_{8}(X)$ contains a strict Cld$^{\omega}$-fan, which is a contradiction with $F_{8}(X)$ is a $k_{R}$-space. Since $F_{8}(X)$ is an $\aleph_{0}$-space by \cite[Theorem 4.1]{AOP1989}, the subspace $F_{8}(X)$ is normal, hence it suffices to show that $F_{8}(X)$ contains a strong Cld$^{\omega}$-fan. Furthermore, it follows from \cite[Proposition 2.9.2]{B2016} that each compact-finite family of subsets of $X$ is strongly
compact-finite, hence it suffices to show that $F_{8}(X)$ contains a Cld$^{\omega}$-fan. For any $n, j\in\omega$, put $$E_{n, j}=\{z_{n}^{-1}y_{j}^{-1}xz_{n}x_{n, j}\}.$$ Then it is obvious that each $E_{n, j}$ is closed. Furthermore, it follows from the proof of \cite[Proposition 2.1]{Y2005} that $x\in\overline{\bigcup_{n, j\in\omega} E_{n, j}}\setminus\bigcup_{n, j\in\omega} E_{n, j}$, and thus the family $\{E_{n, j}: n, j\in\omega\}$ is not locally finite at the point $x$. Next we claim that the family $\{E_{n, j}: n, j\in\omega\}$ is compact-finite.

Suppose not, there exist a compact subset $K$ and two sequences $\{n_{i}\}$ and $\{j_{i}\}$ such that $K\cap E_{n_{i}, j_{i}}\neq\emptyset$. Then the closure of the set supp($K$) is compact since $F_{8}(X)$ is paracompact. Since the family $\{E_{n, j}: n, j\in\omega\}$ are disjoint each other, one of the sequences $\{n_{i}\}$ and $\{j_{i}\}$ is an infinite set. If $\{n_{i}\}$ is an infinite set, then the closed discrete set $\{z_{n_{i}}: i\in\omega\}$ is contained in $\mbox{supp}(K)$, which is a contradiction since $\overline{\mbox{supp}(K)}$ is compact. If $\{j_{i}\}$ is an infinite set and $\{n_{i}\}$ is a finite set, then there exists $N\in\omega$ such that $\{x_{n_{i}, j_{i}}: i\in\mathbb{N}\}\subset \bigcup_{j<N}X_{j}$. Obviously, the closed discrete set $\{x_{n_{i}, j_{i}}: i\in\mathbb{N}\}$ is an infinite set and contains in supp($K$), which is a contradiction.
\end{proof}

Now we can show one of our main theorems in this paper.

\begin{theorem}\label{F8}
For a metrizable space $X$, the following are equivalent:
\begin{enumerate}
\item $F(X)$ is a $k$-space;
\item $F_{8}(X)$ is a $k$-space;
\item $F_{8}(X)$ is a $k_R$-space;
\item the space $X$ is locally compact separable or discrete.
\end{enumerate}
\end{theorem}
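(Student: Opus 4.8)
The plan is to establish the cycle of implications $(1)\Rightarrow(2)\Rightarrow(3)\Rightarrow(4)\Rightarrow(1)$, exploiting the fact that the two technical obstruction results, Propositions~\ref{separable} and~\ref{p1}, already isolate the difficult content and essentially deliver the step $(3)\Rightarrow(4)$ for free.

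For $(1)\Rightarrow(2)$ I would recall that each $F_{n}(X)$ is closed in $F(X)$ and that a closed subspace of a $k$-space is again a $k$-space; applying this to $F_{8}(X)$ gives the claim. The step $(2)\Rightarrow(3)$ is then immediate, since every Tychonoff $k$-space is a $k_{R}$-space and $F_{8}(X)$, being a subspace of a topological group, is Tychonoff.

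The step $(3)\Rightarrow(4)$ is where I would assemble the two propositions. Because $X$ is metrizable it is a stratifiable $k$-space (metrizable spaces are stratifiable and first countable, hence $k$-spaces), so Proposition~\ref{separable} forces $X$ to be separable or discrete, while Proposition~\ref{p1} forces $X$ to be locally compact. If $X$ is discrete we land in the second alternative of $(4)$; otherwise $X$ is separable and locally compact, which is the first alternative. Thus $(4)$ holds in either case.

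The only direction that cannot be fed by the fan-obstruction machinery, and hence the step I expect to be the main obstacle, is $(4)\Rightarrow(1)$: here a genuine positive construction of the $k$-topology on $F(X)$ is needed rather than a nonexistence argument. I would argue by cases. If $X$ is discrete, then $F(X)$ carries the discrete topology and is trivially a $k$-space. If $X$ is locally compact, separable and metrizable, then $X$ is second countable and locally compact, hence $\sigma$-compact, so that $X$ is a hemicompact $k$-space, i.e.\ a $k_{\omega}$-space; by the classical theory of free topological groups over $k_{\omega}$-spaces (see \cite{AT2008}), $F(X)$ is then itself a $k_{\omega}$-space and in particular a $k$-space. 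This recovers Yamada's characterization and closes the cycle.
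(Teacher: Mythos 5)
Your proof is correct and follows essentially the same route as the paper: the substantive implication $(3)\Rightarrow(4)$ is obtained exactly as in the paper, by combining Propositions~\ref{separable} and~\ref{p1}, and the remaining arrows are standard. The only difference is that where the paper simply cites Yamada \cite{Y2005} for the equivalence of (1) and (4) and leaves $(1)\Rightarrow(2)$ implicit, you supply the standard arguments yourself --- the closed-subspace step for $(1)\Rightarrow(2)$ and the $k_{\omega}$-space argument for $(4)\Rightarrow(1)$ --- which makes your version slightly more self-contained.
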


\begin{proof}
The equivalence of (1) and (4) was showed in \cite{Y2005}. It is obvious that (2) $\Rightarrow$ (3). By Propositions~\ref{separable} and~\ref{p1}, we have (3) $\Rightarrow$ (4).
\end{proof}

By Theorem~\ref{F8}, it is natural to ask the following question:

\begin{question}
Let $X$ be a metrizable space. If $F_{n}(X)$ is a $k_{R}$-space for some $n\in\{4, 5, 6, 7\}$, then is $F_{8}(X)$ a $k_{R}$-space?
\end{question}

{\bf Note} For each $n\in\{2, 3\}$, the answer to the above question is negative. Indeed, for an arbitrary metrizable space $X$, since $i_{2}$ is a closed mapping, $F_{2}(X)$ is a Fr\'{e}chet-Urysohn space (and thus a $k$-space). For $n=3$, we have the following Theorem~\ref{F3}. However, for each $n\in\{4, 5, 6, 7\}$, the above question is still unknown for us.

\begin{proposition}\label{p2}
For a metrizable space $X$ if $F_{3}(X)$ is a $k_{R}$-space, then $X$ is locally compact or NI($X$) is compact.
\end{proposition}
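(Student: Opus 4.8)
The plan is to argue by contraposition, following the same strategy as Propositions~\ref{separable} and~\ref{p1}: I would assume that $X$ is neither locally compact nor has $\mbox{NI}(X)$ compact, and then exhibit a fan inside $F_{3}(X)$ strong enough to contradict the $k_{R}$-property. Since any metrizable space is stratifiable and $F_{3}(X)$ will again be an $\aleph_{0}$-space (hence normal), by the implications after Definition~2.3 and by \cite[Proposition 2.9.2]{B2016} it suffices to produce a plain Cld$^{\omega}$-fan in $F_{3}(X)$; normality and the $\aleph_{0}$-property upgrade it to a strict fan, whose existence is incompatible with $F_{3}(X)$ being a $k_{R}$-space.

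The first step is to extract a concrete closed test subspace from the negated hypotheses. If $\mbox{NI}(X)$ is not compact, then because $X$ is metrizable $\mbox{NI}(X)$ is not countably compact, so it contains a closed discrete sequence $\{d_{n}:n\in\omega\}$ of non-isolated points; around each $d_{n}$ I can choose a nontrivial convergent sequence $S_{n}=\{x_{n,j}:j\in\omega\}\cup\{d_{n}\}$ with the $S_{n}$ pairwise disjoint and the family discrete in $X$. Failure of local compactness should, as in Proposition~\ref{p1}, let me also capture a hedgehog-type point $x$ witnessing non-local-compactness. Because the word length here is only $3$, the building blocks must be short: the natural candidate is something like $E_{n,j}=\{d_{n}x_{n,j}^{-1}z\}$ or $\{x_{n,j}^{-1}d_{n}\}$-type length-two or length-three words, chosen so that (i) each singleton $E_{n,j}$ is closed, (ii) the doubly-indexed family is pairwise disjoint, and (iii) the accumulation point (either $x$ or $e$) lies in the closure of the union but not in the union. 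The closedness of each $E_{n,j}$ again comes from \cite{U1991}: the support is closed discrete, so $F(\mbox{supp})$ embeds as a discrete closed subgroup.

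For the compact-finiteness, I would mirror the support argument: any compact $K\subset F_{3}(X)$ has $\overline{\mbox{supp}(K)}$ compact (paracompactness of $F(X)$ for stratifiable $X$), and if $K$ met infinitely many $E_{n_{i},j_{i}}$ then either infinitely many distinct $d_{n_{i}}$ or infinitely many $x_{n_{i},j_{i}}$ would sit in $\mbox{supp}(K)$, contradicting compactness of the support via the closed-discreteness of $\{d_{n}\}$ or of the combined $\bigcup S_{n}$. The non-local-finiteness at the accumulation point is the step where I would lean on the explicit Uspenskii neighborhood base $\{h:p_{r}(h)<\delta\}$ and a computation of $p_{r}$ on the representing words, exactly as in statement~(3) of Proposition~\ref{separable}.

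The main obstacle I anticipate is the tight length budget: with only three letters available I have far less freedom than the length-$8$ construction, so the words must be engineered so that a single $p_{r}$-estimate controls the distance to $e$ while still keeping the family pairwise disjoint and closed. In particular I must verify that no cancellation collapses the reduced length below what the argument needs, and that the conclusion is genuinely the disjunction ``$X$ locally compact \emph{or} $\mbox{NI}(X)$ compact'' rather than a single condition — which means the fan construction must use both hypotheses simultaneously, and I expect the delicate bookkeeping to lie in separating which negated hypothesis feeds which coordinate of the chosen words.
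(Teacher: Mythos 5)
Your plan follows essentially the same route as the paper's proof: from the two negated hypotheses it extracts a closed subspace consisting of a hedgehog-type point $x$ with spine points $x_{n,i}$ together with a discrete family of convergent sequences $C_{n}=\{c_{n,i}\}\cup\{c_{n}\}$, forms the length-three words $c_{n}c_{n,i}^{-1}x_{n,i}$ (your first candidate $d_{n}x_{n,j}^{-1}z$, up to renaming), verifies closedness via the discrete support subgroups of \cite{U1991}, compact-finiteness via the compactness of $\overline{\mbox{supp}(K)}$, and non-local-finiteness at $x$ (the paper cites \cite{Y2005} for this last step rather than redoing the $p_{r}$-estimate), before upgrading to a strict Cld$^{\omega}$-fan by normality. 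The outline is correct and matches the paper in every essential respect.
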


\begin{proof}
Assume on the contrary that neither $X$ is locally compact nor the set of all non-isolated points of $X$ is compact. Then $X$ contains a closed subspace $$Y=\{x\}\cup\bigcup_{n\in\omega}X_{n}\oplus\bigoplus_{n\in\omega}C_{n},$$where for every $n\in\omega$

$X_{n}=\{x_{n, i}: i\in\omega\}$ is a closed discrete subset of $X$,

$\{\{x\}\cup\bigcup_{m\geq n}X_{m}: n\in\omega\}$ is a neighborhood base of $x$ in $Y$,

$C_{n}=\{c_{n, i}: i\in\omega\}\cup\{c_{n}\}$ is a convergent sequence with its limit $c_{n}$, and

$C_{n}$ is contained in the open subset $U_{n}$ of $X$ such that the family $\{U_{n}: n\in\omega\}$ is discrete in $X$ and $(\{x\}\cup\bigcup_{m\in\omega}X_{m})\cap \overline{\bigcup_{m\in\omega}U_{m}}=\emptyset$.

In order to obtain a contradiction, we shall construct a strict Cld$^{\omega}$-fan in $F_{3}(X)$. For any $n, i\in\omega$, choose an open neighborhood $O_{n}^{i}$ of the point $x_{n, i}$  in $X$ such that the family $\{O_{n}^{i}: i\in\omega\}$ is discrete, $O_{n}^{i}\cap \overline{\bigcup_{m\in\omega}U_{m}}=\emptyset$ and $O_{n}^{i}\cap (\{x\}\cup\bigcup_{n\in\omega}X_{n})=\{x_{n, i}\}$.

For each $n, i\in\omega$, let
$$E(n, i)=\{g_{n, i}=c_{n}c_{n, i}^{-1}x_{n, i}\}$$ and $$U(n, i)=V_{n}^{i}(W_{n}^{i})^{-1}O_{n}^{i},$$where $V_{n}^{i}$ and $W_{n}^{i}$ are two arbitrary open neighborhoods of $c_{n}$ and $c_{n, i}$ in $X$ respectively such that $V_{n}^{i}\cup W_{n}^{i}\subset U_{n}$ and $V_{n}^{i}\cap W_{n}^{i}=\emptyset$.
Obviously, each $E(n, i)$ is closed and it follows from \cite[Corollary 7.1.19]{AT2008} that $U(n, i)$ is an open neighborhood of $E(n, i)$ for each $n, i\in\omega$. In \cite{Y2005}, the author has showed that $x\in\overline{\bigcup_{n, i}E(n, i)}\setminus\bigcup_{n, i}E(n, i)$, hence the family $\{E(n, i): n, i\in\omega\}$ is not locally finite in $F_{3}(X)$. To complete the proof, it suffices to show that the family $\{U(n, i): n, i\in\omega\}$ is compact-finite in $F_{3}(X)$. Suppose not, there exists a compact subset $K$ and two sequences $\{n_{j}\}$ and $\{i_{j}\}$ such that $K\cap U(n_{j}, i_{j})\neq\emptyset$. Similar to the proof of Proposition~\ref{p1}, we can obtain a contradiction.
\end{proof}

\begin{theorem}\label{F3}
For a metrizable space $X$ the following are equivalent:
\begin{enumerate}
\item $F_{3}(X)$ is a $k$-space;
\item $F_{3}(X)$ is a $k_R$-space;
\item the space $X$ is locally compact or NI($X$) is compact.
\end{enumerate}
\end{theorem}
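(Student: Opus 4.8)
The plan is to prove the three conditions equivalent by closing the cycle $(1)\Rightarrow(2)\Rightarrow(3)\Rightarrow(1)$; each step is short once the preparatory Proposition~\ref{p2} and the cited work of Yamada are in place, so the theorem is essentially an assembly of facts already available rather than a fresh argument.

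First, $(1)\Rightarrow(2)$ is immediate: $F_{3}(X)$ is a subspace of the topological group $F(X)$, hence Tychonoff, and every Tychonoff $k$-space is a $k_{R}$-space, as recorded in Section~2. Next, $(2)\Rightarrow(3)$ is exactly the content of Proposition~\ref{p2}, which asserts that if $F_{3}(X)$ is a $k_{R}$-space for metrizable $X$, then $X$ is locally compact or $\mathrm{NI}(X)$ is compact; nothing further is required here. Finally, for $(3)\Rightarrow(1)$ I would invoke Yamada's characterization in \cite{Y2005}: for metrizable $X$, the group $F_{3}(X)$ is a $k$-space precisely when $X$ is locally compact or $\mathrm{NI}(X)$ is compact. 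This is the $F_{3}$-level analogue of the equivalence $(1)\Leftrightarrow(4)$ of Theorem~\ref{F8} that was also attributed to \cite{Y2005}, and it closes the loop.

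The point worth flagging is that the difficulty of the theorem does not lie in the assembly above but in Proposition~\ref{p2}, whose proof produces, under the negation of $(3)$, an explicit family $\{E(n,i)\}$ of singletons $\{c_{n}c_{n,i}^{-1}x_{n,i}\}$ forming a strict $\mathrm{Cld}^{\omega}$-fan in $F_{3}(X)$ together with compact-finite open neighborhoods $U(n,i)$; the existence of such a fan contradicts the $k_{R}$-property. The only care needed when writing $(3)\Rightarrow(1)$ is to confirm that Yamada's hypotheses match condition $(3)$ verbatim --- in particular that, in contrast to Theorem~\ref{F8}, no separability assumption on $X$ is needed at the $F_{3}$ level --- so that the cited result delivers exactly the implication required to complete the cycle.
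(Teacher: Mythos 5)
Your proposal is correct and follows essentially the same route as the paper: the paper cites Yamada \cite{Y2005} for the equivalence of (1) and (3), notes that (1) $\Rightarrow$ (2) is obvious since every Tychonoff $k$-space is a $k_{R}$-space, and obtains (2) $\Rightarrow$ (3) from Proposition~\ref{p2}. Your cyclic arrangement $(1)\Rightarrow(2)\Rightarrow(3)\Rightarrow(1)$ merely uses one direction of Yamada's equivalence instead of both, which is an immaterial difference, and your remarks about where the real work lies (the fan construction in Proposition~\ref{p2}) are accurate.
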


\begin{proof}
The equivalence of (1) and (3) was showed in \cite{Y2005}. The implication of (1) $\Rightarrow$ (2) is obvious. By Proposition~\ref{p2}, we have (2) $\Rightarrow$ (3).
\end{proof}

The following theorem was proved in \cite{LLL2016}.

\begin{theorem}\label{La}\cite{LLL2016}
Let $X$ be a stratifiable space such that $X^2$ is a $k_R$-space. If $X$ satisfies one of the following conditions, then either $X$ is metrizable or $X$ is the topological sum of $k_\omega$-subspaces.
\begin{enumerate}
\item $X$ is a $k$-space with a compact-countable $k$-network;
\item $X$ is a Fr\'echet-Urysohn space with a point-countable $k$-network.
\end{enumerate}
\end{theorem}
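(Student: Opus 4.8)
The plan is to prove the theorem by contraposition inside the non-metrizable branch: assuming $X^{2}$ is a $k_{R}$-space, I would show that if $X$ is not metrizable then it must be a topological sum of $k_{\omega}$-subspaces. The organizing principle is the obstruction exploited throughout this paper: a $k_{R}$-space contains no strict Cld$^{\omega}$-fan (this is the $k_{R}$-analogue of the fact that a $k$-space contains no Cld$^{\omega}$-fan, and is Banakh's criterion from \cite{B2016}, used in Propositions~\ref{separable} and~\ref{p1}). So it suffices to establish the implication: if $X$ is stratifiable with a point-countable $k$-network, satisfies (1) or (2), is not metrizable, and is not a topological sum of $k_{\omega}$-subspaces, then $X^{2}$ contains a strict Cld$^{\omega}$-fan. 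The whole argument then reduces to producing and verifying such a fan in the square.

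First I would invoke the structure theory of stratifiable spaces with a point-countable $k$-network (see \cite{Gr1984}). Under (1) or (2) such a space is metrizable exactly when it is first countable, and non-metrizability yields a closed copy of the sequential fan $S_{\omega}$; in the $k$-space case (1) one must also exclude the Arens space $S_{2}$, which is automatic under (2) since $S_{2}$ is not Fr\'{e}chet-Urysohn. Separately, failing to be a topological sum of $k_{\omega}$-subspaces is equivalent, for a paracompact space, to failing to be locally $k_{\omega}$: there is a point $x_{0}$ no neighborhood of which is hemicompact. Combining these, at $x_{0}$ the point-countable $k$-network forces a two-level configuration — countably many pairwise disjoint convergent sequences (or closed discrete sets) accumulating to $x_{0}$, each already carrying fan behavior — which I would isolate as a closed hedgehog-type subspace $Y\subset X$ of the kind used in Proposition~\ref{p1}. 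Extracting $Y$ cleanly is where point-countability of the $k$-network and the collectionwise normality of stratifiable spaces enter.

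With $Y$ in hand I would build the fan in $Y^{2}\subset X^{2}$, following the template of Propositions~\ref{separable} and~\ref{p1}. Indexing by $n\in\omega$, take $F_{n}\subset Y^{2}$ to be closed sets (singletons or finite sets) assembled from the $n$-th sequence/spine, and assign to each $F_{n}$ an $\mathbb{R}$-open neighborhood $U_{n}$ obtained from a discrete open expansion of the relevant closed discrete family (such expansions exist since stratifiable spaces are collectionwise normal). The verification splits exactly as before: (i) $\{U_{n}\}$ is compact-finite in $X^{2}$, since any compact $K\subset X^{2}$ has projections with compact closures, so $\mathrm{supp}(K)$ meets only finitely many members of the discrete expansion and hence only finitely many $U_{n}$; and (ii) $\{F_{n}\}$ is not locally finite, its accumulation occurring at the point $(x_{0},x_{0})$, which follows from the convergence structure at $x_{0}$. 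This produces a strong Cld$^{\omega}$-fan, and since $X^{2}$ is normal the fan is strict by the same upgrade used in the Propositions.

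A strict Cld$^{\omega}$-fan contradicts the hypothesis that $X^{2}$ is a $k_{R}$-space \cite{B2016}, so the assumption fails and $X$ is locally $k_{\omega}$. Finally, since $X$ is paracompact, local $k_{\omega}$-ness partitions $X$ into clopen Lindel\"{o}f pieces, each of which (being a Lindel\"{o}f, locally $k_{\omega}$ $k$-space) is $k_{\omega}$; this is the desired topological sum of $k_{\omega}$-subspaces. I expect the main obstacle to be the structural step of the second paragraph: isolating at $x_{0}$ a closed subspace $Y$ whose square supports the fan while keeping the two verification conditions compatible. One needs enough sequences accumulating at $x_{0}$ to defeat local finiteness, yet the discrete expansions must stay discrete so that the neighborhoods remain compact-finite in the product; balancing \emph{not locally finite at} $(x_{0},x_{0})$ against \emph{compact-finite in} $X^{2}$ is the delicate point, and it is here that point-countability of the $k$-network — bounding how much of the network a compact set can meet — does the work.
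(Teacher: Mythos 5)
First, a point of reference: the paper does not prove Theorem~\ref{La} at all --- it is quoted verbatim from the companion paper \cite{LLL2016} --- so there is no internal proof to compare against, and your attempt has to be judged on its own terms.

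Your high-level strategy (negate both alternatives, manufacture a strict Cld$^{\omega}$-fan in $X^{2}$, and invoke Banakh's theorem that $k_{R}$-spaces contain no such fans) is the right one and matches the method used throughout this paper. The genuine gap is in the central construction: you localize both negated conclusions at a single point $x_{0}$ and propose to build the fan inside $Y^{2}$ for a single hedgehog-type subspace $Y$. This cannot work. The two hypotheses being negated produce two \emph{different} witnesses at possibly \emph{different} points: non-metrizability yields a closed copy of the sequential fan $S_{\omega}$ (or the Arens space) at some non-first-countable point $p$, while failure to be a sum of $k_{\omega}$-subspaces yields a point $x_{0}$ with no $k_{\omega}$-neighborhood, witnessed by a metric-fan/hedgehog configuration. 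The square of either single structure contains no fan whatsoever: the square of a metric fan is metrizable, and $S_{\omega}\times S_{\omega}$ is $k_{\omega}$ (a product of $k_{\omega}$-spaces is $k_{\omega}$), so both squares are $k$-spaces and a fortiori $k_{R}$-spaces. The obstruction lives only in the \emph{mixed} product $S_{\omega}\times M$ around the point $(p,x_{0})$, and $p$ and $x_{0}$ need not coincide --- the example $X=S_{\omega}\oplus M$ ($M$ the metric fan) is neither metrizable nor a sum of $k_{\omega}$-subspaces, yet the two witnesses sit in different clopen pieces, so any argument confined to a single $Y^{2}$ at one point misses it entirely.

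Two smaller issues. Under condition (1) you must actually handle the Arens space $S_{2}$ rather than ``exclude'' it ($S_{2}$ does not contain a \emph{closed} copy of $S_{\omega}$, so one needs a separate fan construction for the product of $S_{2}$ with a non-locally-compact piece, or a reduction showing $S_{2}$ cannot occur here). And the step ``not a topological sum of $k_{\omega}$-subspaces $\Rightarrow$ not locally $k_{\omega}$ $\Rightarrow$ a closed copy of the metric fan at $x_{0}$'' is exactly where the compact-countable (resp.\ point-countable) $k$-network must do real work; as written it is asserted rather than argued, and it is the place where conditions (1) and (2) genuinely differ. Your final reassembly step (paracompact $+$ locally $k_{\omega}$ $\Rightarrow$ sum of Lindel\"{o}f, hence hemicompact, hence $k_{\omega}$ pieces) is fine.
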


Since $F_{2}(X)$ contains a closed copy of $X\times X$, it follows from Theorem~\ref{La} that we have the following theorem.

\begin{theorem}\label{F2}\
Let $X$ be a stratifiable $k$-space with a compact-countable $k$-network. Then the following are equivalent:
\begin{enumerate}
\item $F_2(X)$ is a $k$-space;
\item $F_2(X)$ is a $k_R$-space;
\item either $X$ is metrizable or $X$ is the topological sum of $k_\omega$-subspaces.
\end{enumerate}
\end{theorem}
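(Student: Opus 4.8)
The plan is to establish the cycle of implications $(1)\Rightarrow(2)\Rightarrow(3)\Rightarrow(1)$. The implication $(1)\Rightarrow(2)$ is immediate, since a free topological group is Tychonoff, so $F_{2}(X)$ is a Tychonoff space, and every Tychonoff $k$-space is a $k_{R}$-space. The real content sits in the remaining two implications, with $(2)\Rightarrow(3)$ being where Theorem~\ref{La} is brought to bear.

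For $(2)\Rightarrow(3)$ I would exploit the hypothesis that $F_{2}(X)$ contains $X\times X$ as a \emph{closed} subspace. Assuming $F_{2}(X)$ is a $k_{R}$-space, I would invoke the fact that the $k_{R}$-property is inherited by closed subspaces (one of the extensions of $k$-space theory to $k_{R}$-spaces carried out in the companion paper \cite{LLL2016}) to conclude that $X^{2}$ is a $k_{R}$-space. Since, by hypothesis, $X$ is a stratifiable $k$-space with a compact-countable $k$-network and $X^{2}$ is now known to be a $k_{R}$-space, condition (1) of Theorem~\ref{La} is satisfied; applying that theorem yields at once that either $X$ is metrizable or $X$ is the topological sum of $k_{\omega}$-subspaces, which is exactly (3).

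For $(3)\Rightarrow(1)$ I would argue uniformly through the canonical map $i_{2}\colon (X\bigoplus X^{-1}\bigoplus\{e\})^{2}\to F_{2}(X)$, which is a quotient mapping \cite{AT2008}. Since a quotient image of a $k$-space is a $k$-space, it suffices to prove that the domain $(X\bigoplus X^{-1}\bigoplus\{e\})^{2}$ is a $k$-space in each of the two cases of (3). If $X$ is metrizable, then $X\bigoplus X^{-1}\bigoplus\{e\}$ is metrizable and so is its square, hence the domain is a $k$-space. If $X=\bigoplus_{\alpha}X_{\alpha}$ with each $X_{\alpha}$ a $k_{\omega}$-space, then $X\bigoplus X^{-1}\bigoplus\{e\}$ is a topological sum of $k_{\omega}$-spaces, so its square decomposes as the topological sum of the products of pairs of these summands; each such product is a product of two $k_{\omega}$-spaces and is therefore itself $k_{\omega}$, whence the domain is a topological sum of $k_{\omega}$-spaces and in particular a $k$-space. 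In either case $F_{2}(X)$ is a $k$-space.

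The step I expect to be the main obstacle is the first half of $(2)\Rightarrow(3)$, namely transporting the $k_{R}$-property from $F_{2}(X)$ down to $X^{2}$. Unlike the $k$-property, the $k_{R}$-property does not yield to a naive extension-by-restriction argument on closed subspaces, because extending a function that is only assumed continuous on compacta need not produce a globally admissible function; so this step genuinely rests on the closed-hereditary lemma of \cite{LLL2016}, together with the (standard but essential) fact that the copy of $X\times X$ inside $F_{2}(X)$ is closed. Once $X^{2}$ is known to be a $k_{R}$-space, Theorem~\ref{La} does all the remaining work in this direction, and the other two implications are comparatively routine.
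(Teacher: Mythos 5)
Your handling of the substantive implication $(2)\Rightarrow(3)$ is exactly the paper's argument: the paper's entire written proof of this theorem is the single observation that $F_2(X)$ contains a closed copy of $X\times X$, so that the closed-hereditary behaviour of the $k_R$-property in stratifiable spaces (which the paper attributes to \cite{BG2014}, not to \cite{LLL2016}) pushes the $k_R$-property down to $X^2$, after which Theorem~\ref{La} gives (3). Two soft spots are worth recording. First, the closed-hereditary lemma requires the \emph{ambient} space $F_2(X)$ --- not merely $X$ --- to be stratifiable; neither you nor the paper verifies this, and it is not automatic from the stratifiability of $X$ (it can be obtained, e.g., by showing $i_2$ is a closed map from the stratifiable space $(X\oplus X^{-1}\oplus\{e\})^2$ onto $F_2(X)$ and using that closed images of stratifiable spaces are stratifiable, but that is a step that has to be supplied). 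Second, in your $(3)\Rightarrow(1)$ argument --- an implication the paper does not prove at all, treating it as known --- the blanket claim that $i_2$ is a quotient mapping for an arbitrary Tychonoff space is not what \cite{AT2008} provides: what is available is that $i_2$ is a closed (hence quotient) map when $X$ is metrizable (cf.\ \cite{Y2005}, and the paper's own remark that $F_2(X)$ is then Fr\'echet--Urysohn) and that all $i_n$ are quotient when $X$ is a $k_\omega$-space. For a topological sum of $k_\omega$-subspaces you therefore need a separate justification that $i_2$ is quotient, or a direct argument that $F_2(X)$ is determined by compact sets supported on finitely many summands. The remaining ingredients of your $(3)\Rightarrow(1)$ step --- that the square of a topological sum of $k_\omega$-spaces is a topological sum of $k_\omega$-spaces, and that quotient images of $k$-spaces are $k$-spaces --- are correct.
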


The following proposition shows that we can not replace ``$F_{3}(X)$'' with ``$F_{4}(X)$'' in Theorem~\ref{F3}.
First, we recall a special space. Let $$M_3=\bigoplus\{C_\alpha: \alpha<\omega_1\},$$ where $C_\alpha=\{c(\alpha, n): n\in \mathbb{N}\}\cup\{c_\alpha\}$ with $c(\alpha, n)\to c_\alpha$ as $n\rightarrow\infty$ for each $\alpha\in\omega_{1}$.

\begin{proposition}\label{M3}
The subspace $F_{4}(M_{3})$ is not a $k_{R}$-space.
\end{proposition}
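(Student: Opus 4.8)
The plan is to show that $F_4(M_3)$ contains a strict $\mathrm{Cld}^{\omega}$-fan; since a $k_R$-space admits no such fan (this is precisely the obstruction used in Propositions~\ref{separable} and~\ref{p1}, cf.\ \cite{B2016}), this forces $F_4(M_3)$ not to be a $k_R$-space. The guiding idea is to compress the eight-letter words of Proposition~\ref{separable} into reduced length~$4$, exploiting the special feature of $M_3$ that each point $c_\alpha$ of the closed discrete set $\{c_\alpha:\alpha<\omega_1\}$ is itself the limit of the sequence $c(\alpha,n)$ inside its own clopen summand $C_\alpha$. Thus a single block $c(\alpha,k)c_\alpha^{-1}$ does double duty: it is a \emph{vanishing displacement} (it tends to $e$ as $k\to\infty$) and a \emph{discrete label} (its support pins down the summand $C_\alpha$). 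As in Proposition~\ref{separable}, since $F_4(M_3)$ is normal it suffices to produce a \emph{strong} fan.

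Concretely, I would first fix for each $\alpha<\omega_1$ a map $f_\alpha:\omega_1\to\omega$ with $f_\alpha\uhr\alpha:\alpha\to\omega$ a bijection, exactly as in Proposition~\ref{separable}, and for $k\in\omega$ set
$$F_k=\{\,c(\alpha,k)\,c_\alpha^{-1}\,c_\beta\,c(\beta,k)^{-1}:\ f_\alpha(\beta)=k,\ \beta<\alpha<\omega_1\,\}.$$
Each such word is reduced of length $4$ with exponent sum $0$, hence lies in $H_0(M_3)\cap(F_4(M_3)\setminus F_3(M_3))$. I would then check the three defining properties, in parallel with Proposition~\ref{separable}. \emph{Closedness:} since the only sequence index occurring in $F_k$ is $k$ itself, $\mathrm{supp}(F_k)$ meets each summand $C_\gamma$ in at most the two points $\{c(\gamma,k),c_\gamma\}$, so $\mathrm{supp}(F_k)$ is closed discrete in $M_3$; by \cite{U1991} the subgroup it generates is discrete and closed, whence $F_k$ is closed in $F_4(M_3)$. \emph{Compact-finiteness:} for each $\gamma$ choose a clopen neighbourhood $O_\gamma\subseteq C_\gamma$ of $c_\gamma$ omitting $c(\gamma,k)$, and put $U_k=\bigcup\{\{c(\alpha,k)\}O_\alpha^{-1}O_\beta\{c(\beta,k)\}^{-1}:f_\alpha(\beta)=k,\ \beta<\alpha\}$, which is open in $F_4(M_3)\setminus F_3(M_3)$ by \cite[Corollary 7.1.19]{AT2008}, hence $\mathbb{R}$-open as $F_4(M_3)$ is Tychonoff. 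If a compact $K$ met infinitely many $U_k$, then, using that $\overline{\mathrm{supp}(K)}$ is compact (here $M_3$ metrizable makes $F(M_3)$ paracompact) and therefore meets only finitely many summands $C_\gamma$, the relation $f_\alpha(\beta)=k$ with $\mathrm{supp}(K)$ meeting $C_\alpha$ and $C_\beta$ would permit only finitely many colours $k$, a contradiction.

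The heart of the argument is the third property, that $\{F_k\}$ is not locally finite, which I would obtain by proving $e\in\overline{\bigcup_k F_k}$ (with $e\notin\bigcup_k F_k$ and closedness, this yields non-local-finiteness at $e$). Fix $r=\{\rho_g\}\in P(M_3)^{F(M_3)}$ and $\delta>0$; representing the word with trivial conjugators gives $p_r\big(c(\alpha,k)c_\alpha^{-1}c_\beta c(\beta,k)^{-1}\big)\le\rho_e(c(\alpha,k),c_\alpha)+\rho_e(c_\beta,c(\beta,k))$. Since $c(\gamma,n)\to c_\gamma$, for each $\gamma$ there is a threshold $N(\gamma)$ with $\rho_e(c(\gamma,n),c_\gamma)<\delta/2$ for $n\ge N(\gamma)$; a pigeonhole $\omega_1\to\omega$ yields an uncountable $B$ on which $N\equiv N^{\ast}$. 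Choosing $\alpha\in B$ with infinitely many predecessors in $B$ and then, by injectivity of $f_\alpha\uhr(B\cap\alpha)$, a $\beta\in B\cap\alpha$ with $k:=f_\alpha(\beta)\ge N^{\ast}$, I obtain a word of $F_k$ with $p_r<\delta$; by Uspenskii's neighbourhood base \cite{U1991} this meets every basic neighbourhood of $e$. This is the length-$4$ reincarnation of step~(3) of Proposition~\ref{separable}. Finally, $F_4(M_3)$ being normal, the strong $\mathrm{Cld}^{\omega}$-fan $\{F_k\}$ refines to a strict one, which finishes the proof.

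I expect the main obstacle to be not any single estimate but the simultaneous balancing of the three constraints within reduced length $4$: closedness forces the sequence index to be tied to the colour $k$; compact-finiteness forces the colour to be recoverable from the pair $(\alpha,\beta)$ of limit points in the support; and non-local-finiteness forces, for an \emph{arbitrary} pseudometric family $r$, a word of large colour that is $p_r$-close to $e$. The construction reconciles all three precisely because the two blocks $c(\alpha,k)c_\alpha^{-1}$ and $c_\beta c(\beta,k)^{-1}$ are vanishing displacements whose supports already encode the two limit points, so that no separate "conjugator" letters are needed. This is what cannot be arranged with fewer than four letters, and is, I believe, the conceptual content behind the jump from $F_3$ (Theorem~\ref{F3}) to $F_4$.
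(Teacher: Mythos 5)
Your proof is correct, but it takes a genuinely different route from the paper's. The paper builds the fan from a Luzin-type gap: it invokes \cite[Theorem 20.2]{JW1997} to get two families $\mathscr{A}=\{A_\alpha\}_{\alpha<\omega_1}$, $\mathscr{B}=\{B_\alpha\}_{\alpha<\omega_1}$ of infinite subsets of $\omega$ with all intersections $A_\alpha\cap B_\beta$ finite but admitting no separator, and sets $X_n=\{c(\alpha,n)c_\alpha^{-1}c(\beta,n)c_\beta^{-1}: n\in A_\alpha\cap B_\beta\}$; compact-finiteness then comes from the finiteness of each $A_\alpha\cap B_\beta$, and accumulation at $e$ from the no-separator property applied to the sets $A_\alpha'=\{n\in A_\alpha: n\ge f(\alpha)\}$, $B_\beta'=\{n\in B_\beta: n\ge f(\beta)\}$. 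You instead recycle the transversal colourings $f_\alpha$ of Proposition~\ref{separable}: a fixed pair $(\alpha,\beta)$ contributes to exactly one colour $k=f_\alpha(\beta)$ (which gives compact-finiteness just as the finiteness of $A_\alpha\cap B_\beta$ does), while the bijectivity of $f_\alpha\uhr\alpha$ supplies arbitrarily large colours below the uniform threshold set $B$ (which plays the role of the no-separator property). Both arguments are ZFC constructions and both verify the same three properties (closedness via discreteness of $\mathrm{supp}(F_k)$, compact-finiteness of the open expansions via compactness of $\overline{\mathrm{supp}(K)}$, and accumulation at $e$ via Uspenskii's description of the neighbourhood base); your version has the advantage of needing no set-theoretic input beyond the colourings already used earlier in the paper, and of making the analogy with Proposition~\ref{separable} explicit, whereas the paper's gap construction is the one it reuses verbatim in the subsequent theorem on $\mathrm{NI}(X)$ separable. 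Two minor points to tidy up: the clopen sets $O_\gamma$ in your expansion $U_k$ must depend on $k$ (e.g.\ $O_\gamma^k=C_\gamma\setminus\{c(\gamma,m): m\le k\}$, as in the paper's $C_\alpha^n$), and you should note explicitly that each word $c(\alpha,k)u^{-1}vc(\beta,k)^{-1}$ with $u\in O_\alpha^k$, $v\in O_\beta^k$, $\alpha\neq\beta$ is reduced of length $4$, so that $U_k\subseteq F_4(M_3)\setminus F_3(M_3)$ and \cite[Corollary 7.1.19]{AT2008} applies.
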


\begin{proof}
It suffices to show that $F_{4}(M_{3})$ contains a strict Cld$^{\omega}$-fan. It follows from \cite[Theorem 20.2]{JW1997} that we can find two families $\mathscr{A}=\{A_{\alpha}: \alpha\in\omega_{1}\}$ and $\mathscr{B}=\{B_{\alpha}: \alpha\in\omega_{1}\}$ of infinite subsets of $\omega$ such that

(a) $A_{\alpha}\cap B_{\beta}$ is finite for all $\alpha, \beta\in\omega_{1}$;

(b) for no $A\subset \omega$, all the sets $A_{\alpha}\setminus A$ and $B_{\alpha}\cap A$, $\alpha\in\omega_{1}$ are finite.

For each $n\in\omega$, put $$X_{n}=\{c(\alpha, n)c_{\alpha}^{-1}c(\beta, n)c_{\beta}^{-1}: n\in A_{\alpha}\cap B_{\beta}, \alpha, \beta\in\omega_{1}\}.$$ It suffices to show the following three statements.

\smallskip
(1) The family $\{X_{n}\}$ is strictly compact-finite in $F_{4}(M_{3})$.

\smallskip
Since $M_{3}$ is a La\v{s}nev space, it follows from \cite[Theorem 7.6.7]{AT2008} that $F(M_{3})$ is also a paracompact $\sigma$-space, hence $F_{4}(M_{3})$ is paracompact (and thus normal). Hence it suffices to show that the family $\{X_{n}\}$ is strongly compact-finite in $F_{4}(M_{3})$. For each $\alpha\in\omega_{1}$ and $n\in\omega$, let $C_{\alpha}^{n}=C_{\alpha}\setminus\{c(\alpha, m): m\leq n\},$ and put $$U_{n}=\{c(\alpha, n)x^{-1}c(\beta, n)y^{-1}: n\in A_{\alpha}\cap B_{\beta}, \alpha, \beta\in\omega_{1}, x\in C_{\alpha}^{n}, y\in C_{\beta}^{n}\}.$$
Obviously, each $X_{n}\subset F_{4}(M_{3})\setminus F_{3}(M_{3})$. Since $F_{4}(M_{3})\setminus F_{3}(M_{3})$ is open in $F_{4}(M_{3})$, it follows from \cite[Corollary 7.1.19]{AT2008} that each $U_{n}$ is open in $F_{4}(M_{3})$. We claim that the family $\{U_{n}\}$ is compact-finite in $F_{4}(M_{3})$.
Suppose not, then there exist a compact subset $K$ in $F_{4}(M_{3})$ and a subsequence $\{n_{k}\}$ of $\omega$ such that $K\cap U_{n_{k}}\neq\emptyset$ for each $k\in\omega$. For each $k\in\omega$, choose an arbitrary point $$z_{k}=c(\alpha_{k}, n_{k})x_{k}^{-1}c(\beta_{k}, n_{k})y_{k}^{-1}\in K\cap U_{n_{k}},$$ where $x_{k}\in C_{\alpha_{k}}^{n_{k}}$ and $y_{k}\in C_{\beta_{k}}^{n_{k}}$. Since $F_{4}(M_{3})$ is paracompact, it follows from \cite{AOP1989} that the closure of the set supp($K$) is compact in $M_{3}$. Therefore, there exists $N\in\omega$ such that $$\mbox{supp}(K)\cap \bigcup\{C_{\alpha}: \alpha\in\omega_{1}\setminus\{\alpha_{i}\in\omega_{1}: i\leq N\}\}=\emptyset,$$ that is, supp($K$$)\subset \bigcup_{\alpha\in\{\gamma_{i}\in\omega_{1}: i\leq N\}} C_{\alpha}.$ Since each $z_{k}\in K$, there exists $$\alpha_{k}, \beta_{k}\in\{\alpha_{i}\in\omega_{1}: i\leq N\}$$ such that $A_{\alpha_{k}}\cap B_{\beta_{k}}$ is an infinite set, which is a contradiction with $A_{\alpha}\cap B_{\beta}$ is finite for all $\alpha, \beta<\omega_{1}$.

\smallskip
(2) Each $X_{n}$ is closed in $F_{4}(M_{3})$.

\smallskip
Fix an arbitrary $n\in\omega$. Next we shall show that $X_{n}$ is closed in $F_{4}(M_{3})$. Let $Z=\mbox{supp}(X_{n})$. Then $Z$ is a closed discrete subset of $M_{3}$. Since $M_{3}$ is metriable, it follows from \cite{U1991} that $F(Z)$ is homeomorphic to a closed subgroup of $F(M_{3})$, hence $F_{4}(Z)$ is a closed subspace of $F_{4}(M_{3})$. Since $F(Z)$ is discrete and $X_{n}\subset F_{4}(Z)$, the set $X_{n}$ is closed in $F_{4}(Z)$ (and thus closed in $F_{4}(M_{3})$).

\smallskip
(3) The family $\{X_{n}\}$ is not locally finite at the point $e$ in $F_{4}(M_{3})$.

\smallskip
Indeed, it suffices to show that $e\in\overline{\bigcup_{n\in\omega}X_{n}}\setminus\bigcup_{n\in\omega}X_{n}$. For any $\delta>0$ and $r=\{\rho_{g}: g\in F(M_{3})\}$, we shall show that $$\{h\in F_{4}(M_{3}): p_{r}(h)<\delta\}\cap \bigcup_{n\in\omega}X_{n}\neq\emptyset.$$
since $\rho_{e}$ is continuous, we can choose a function $f: \omega_{1}\rightarrow \omega$ such that $\rho_{e}(c(\alpha, n), c_{\alpha})<\frac{\delta}{2}$ for any $\alpha\in\omega_{1}$ and $n\geq f(\alpha)$.
For each $\alpha<\omega_{1}$, put $A_{\alpha}^{\prime}=\{n\in A_{\alpha}: n\geq f(\alpha)\}$ and $B_{\alpha}^{\prime}=\{n\in B_{\alpha}: n\geq f(\alpha)\}$. By the condition (b) of the families $\mathscr{A}$ and $\mathscr{B}$, it is easy to see that there exist $\alpha, \beta\in\omega_{1}$ such that $A_{\alpha}^{\prime}\cap B_{\beta}^{\prime}\neq\emptyset$. So, choose $n\in A_{\alpha}^{\prime}\cap B_{\beta}^{\prime}$. Then $\rho_{e}(c(\alpha, n), c_{\alpha})<\frac{\delta}{2}$ and $\rho_{e}(c(\beta, n), c_{\beta})<\frac{\delta}{2}$. Let $z=c(\alpha, n)c_{\alpha}^{-1}c(\beta, n)c_{\beta}^{-1}$. Then $z\in X_{n}$ and $$p_{r}(z)\leq\rho_{e}(c(\alpha, n), c_{\alpha})+\rho_{e}(c(\beta, n), c_{\beta})<\frac{\delta}{2}+\frac{\delta}{2}=\delta,$$ hence $z\in \{h\in F_{4}(M_{3}): p_{r}(h)<\delta\}\cap \bigcup_{n\in\omega}X_{n}$.
\end{proof}

\begin{theorem}
Let $X$ be a metrizable space. If $F_{4}(X)$ is a $k_{R}$-space, then NI($X$) is separable.
\end{theorem}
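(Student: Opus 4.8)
\emph{Proof proposal.} The plan is to prove the contrapositive: assuming $\mathrm{NI}(X)$ is not separable, I construct a strict Cld$^{\omega}$-fan in $F_{4}(X)$, which by the criterion behind Proposition~\ref{M3} (a $k_{R}$-space admits no strict Cld$^{\omega}$-fan, \cite{B2016}) contradicts $F_{4}(X)$ being a $k_{R}$-space. The idea is to locate a closed copy of the model space $M_{3}$ inside $X$ and then rebuild the fan of Proposition~\ref{M3} directly inside $F_{4}(X)$, exploiting the room a metric provides around the copied convergent sequences.

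First I would produce the copy of $M_{3}$. Since $X$ is metrizable and $\mathrm{NI}(X)$ is non-separable, there are $\varepsilon>0$ and an uncountable $\varepsilon$-separated family $\{p_{\alpha}:\alpha<\omega_{1}\}\subseteq\mathrm{NI}(X)$. The balls $G_{\alpha}=B(p_{\alpha},\varepsilon/3)$ then form a discrete family of open sets in $X$, and since each $p_{\alpha}$ is non-isolated I may pick inside $G_{\alpha}$ a nontrivial sequence $c(\alpha,n)\to c_{\alpha}:=p_{\alpha}$. Setting $C_{\alpha}=\{c(\alpha,n):n\in\omega\}\cup\{c_{\alpha}\}$, the discreteness of $\{G_{\alpha}\}$ shows that $Y=\bigcup_{\alpha<\omega_{1}}C_{\alpha}$ is a closed subspace of $X$ homeomorphic to $M_{3}$. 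By the embedding theorem of Uspenskii \cite{U1991}, $F(Y)$ is a closed subgroup of $F(X)$, so $F_{4}(M_{3})\cong F_{4}(Y)$ is closed in $F_{4}(X)$; moreover, as $X$ is metrizable, $F(X)$ is a paracompact $\sigma$-space by \cite[Theorem 7.6.7]{AT2008}, hence $F_{4}(X)$ is normal.

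Next I would transplant the fan of Proposition~\ref{M3}. Fixing the families $\mathscr{A},\mathscr{B}$ of \cite[Theorem 20.2]{JW1997} exactly as there, I set $X_{n}=\{c(\alpha,n)c_{\alpha}^{-1}c(\beta,n)c_{\beta}^{-1}:n\in A_{\alpha}\cap B_{\beta},\ \alpha,\beta<\omega_{1}\}$, now viewed inside $F_{4}(X)$. That each $X_{n}$ is closed in $F_{4}(X)$ follows as in Proposition~\ref{M3}(2): $\mathrm{supp}(X_{n})$ is closed discrete in $Y$, hence in $X$, so $F(\mathrm{supp}(X_{n}))$ is a discrete closed subgroup of $F(X)$ by \cite{U1991}. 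That $\{X_{n}\}$ is not locally finite at $e$ is likewise identical to Proposition~\ref{M3}(3): for any pseudometric data $r$ and any $\delta>0$ the estimate $p_{r}(c(\alpha,n)c_{\alpha}^{-1}c(\beta,n)c_{\beta}^{-1})\le\rho_{e}(c(\alpha,n),c_{\alpha})+\rho_{e}(c(\beta,n),c_{\beta})<\delta$ goes through because $c(\alpha,n)\to c_{\alpha}$ in $X$ and $\rho_{e}$ is continuous on $X$.

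The one point that does not come for free, and the main obstacle, is compact-finiteness of the neighborhoods in the \emph{larger} group $F_{4}(X)$. In $M_{3}$ the points $c(\alpha,n)$ are isolated, so the sets $U_{n}$ of Proposition~\ref{M3} are automatically open; inside $X$ they need not be, so I would replace each isolated point by a small open ball $O(\alpha,n)\ni c(\alpha,n)$ and each tail $C_{\alpha}^{n}$ by an open neighborhood $T(\alpha,n)$, all chosen inside $G_{\alpha}$ and with $O(\alpha,n)\cap T(\alpha,n)=\emptyset$, and put $U_{n}$ equal to the union of the products $O(\alpha,n)\,T(\alpha,n)^{-1}\,O(\beta,n)\,T(\beta,n)^{-1}$ over admissible $\alpha,\beta$. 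Since the balls $G_{\alpha}$ are pairwise disjoint, these words stay reduced of length $4$, so each $U_{n}\subseteq F_{4}(X)\setminus F_{3}(X)$ is open by \cite[Corollary 7.1.19]{AT2008}. To see that $\{U_{n}\}$ is compact-finite, I take a compact $K\subseteq F_{4}(X)$; paracompactness of $F(X)$ makes $\overline{\mathrm{supp}(K)}$ compact in $X$, so it meets only finitely many of the discrete balls $G_{\alpha}$, whence only finitely many indices $\alpha,\beta$ occur, and the finiteness of each $A_{\alpha}\cap B_{\beta}$ bounds the admissible $n$, exactly as in Proposition~\ref{M3}(1). Normality of $F_{4}(X)$ then upgrades this strongly compact-finite fan to a strict one, producing the contradiction. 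I expect the only delicate bookkeeping to be choosing $O(\alpha,n)$ and $T(\alpha,n)$ small enough to keep the words reduced and their supports confined to the correct balls.
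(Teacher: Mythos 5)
Your proposal is correct and follows essentially the same route as the paper: reduce to a closed copy of $M_{3}$ sitting in a discrete family of open sets, transplant the sets $X_{n}$ from Proposition~\ref{M3}, fatten the supports into disjoint open neighborhoods to get open sets $U_{n}\subseteq F_{4}(X)\setminus F_{3}(X)$, and run the same compact-finiteness and non-local-finiteness arguments. The only difference is that you spell out the construction of the closed copy of $M_{3}$ via an $\varepsilon$-separated set, which the paper leaves implicit.
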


\begin{proof}
Suppose not, then $X$ contains a closed copy of $M_{3}$. Use the same notation in Theorem~\ref{M3}. Since $X$ is metrizable, there exists a discrete family $\{U_{\alpha}\}_{\alpha\in\omega_{1}}$ of open subsets in $X$ such that $C_{\alpha}\subset U_{\alpha}$ for each $\alpha\in\omega_{1}$. For arbitrary $(\alpha, n)\in \omega_{1}\times\omega$, choose open neighborhoods $V_{\alpha, n}$ and $W_{\alpha, n}$ of the point $c(\alpha, n)$ and $c_{\alpha}$ in $X$ respectively such that $V_{\alpha, n}\cap W_{\alpha, n}=\emptyset$ and $V_{\alpha, n}\cup W_{\alpha, n}\subset U_{\alpha}$. For each $n\in\mathbb{N}$, put and $$O_{n}=\bigcup\{V_{\alpha, n}W_{\alpha, n}^{-1}V_{\beta, n}W_{\beta, n}^{-1}: n\in A_{\alpha}\cap B_{\beta}, \alpha, \beta\in\omega_{1}\}.$$

Similar to the proof of Theorem~\ref{M3}, we can show that the family $\{O_{n}\}_{n\in\omega}$ of open subsets is compact-finite in $F_{4}(X)$, hence the family $\{X_{n}\}_{n\in\omega}$ is a strict Cld$^{\omega}$-fan in $F_{4}(X)$, which is a contradiction.
\end{proof}

\section{Open questions}
In this section, we pose some interesting questions about $k_{R}$-spaces in the class of free topological groups, which are still unknown for us.

By Theorems~\ref{F8}, \ref{F3} and~\ref{F2}, it is natural to pose the following question.

\begin{question}
Let $X$ be a metrizable space. For each $n\in\{4, 5, 6, 7\}$, if $F_{n}(X)$ is a $k_{R}$-space, is $F_{n}(X)$ a $k$-space?
\end{question}

In \cite{Y2002}, Yamada also made the following conjecture:

\smallskip
\noindent{\bf Yamada's Conjecture:} The subspace $F_4(X)$ is Fr\'echet-Urysohn if the set of all non-isolated points of a metrizable space $X$ is compact.

Indeed, we know no answer for the following question.

\begin{question}
Is $F_4(X)$ a $k_{R}$-space if the set of all non-isolated points of a metrizable space $X$ is compact?
\end{question}

In particular, we have the following question.

\begin{question}
Let $X=C\bigoplus D$, where $C$ is a non-trivial convergent sequence with its limit point and $D$ is an uncountable discrete space. Is $F_4(X)$ a $k_{R}$-space?
\end{question}

In \cite{BG2014}, the authors showed that each closed subspace of a stratifiable $k_{R}$-space is a $k_{R}$-subspace. However, the following two questions are still open.

\begin{question}
Is each closed subgroup of a $k_{R}$-free topological group $k_{R}$?
\end{question}

\begin{question}
Is each subspace $F_{n}(X)$ of a $k_{R}$-free topological group $k_{R}$?
\end{question}

{\bf Acknowledgements}. The authors wish to thank
the reviewers for careful reading preliminary version of this paper and providing many valuable suggestions.

  %%%%%%%%%%%%%%%%%%%%%%%%%%%%

  \end{document}